\definecolor{UCRceleste}{RGB}{0,192,243}
\definecolor{mintgreen}{RGB}{152,255,152}
\definecolor{pinksalmon}{RGB}{255,102,102}
\definecolor{hueso}{RGB}{245,245,220}
\definecolor{marfil}{RGB}{255,253,208}
\definecolor{amarillo}{RGB}{255,255,0}
\numberwithin{equation}{section}
\newtheorem{theorem}{Theorem}[section]
\newtheorem{lemma}[theorem]{Lemma}
\newtheorem{proposition}[theorem]{Proposition}
\theoremstyle{definition}
\theoremstyle{remark}
\newtheorem{remark}[theorem]{Remark}
\def\moverlay{\mathpalette\mov@rlay}
\def\mov@rlay#1#2{\leavevmode\vtop{%
   \baselineskip\z@skip \lineskiplimit-\maxdimen
   \ialign{\hfil$\m@th#1##$\hfil\cr#2\crcr}}}
\newcommand{\charfusion}[3][\mathord]{
    #1{\ifx#1\mathop\vphantom{#2}\fi
        \mathpalette\mov@rlay{#2\cr#3}
      }
    \ifx#1\mathop\expandafter\displaylimits\fi}
\newcommand{\suchthat}{\;\ifnum\currentgrouptype=16 \middle\fi|\;}
\newcommand{\Z}{\mathbb{Z}}
\newcommand{\Q}{\mathbb{Q}}
\newcommand{\R}{\mathbb{R}}
\newcommand{\epsq}{\epsilon_q}
\newcommand{\epsk}{\epsilon_k}
\newcommand{\ov}{\overline{p}}
\newcommand{\A}{\widetilde{A}}
\newcommand{\ct}{\widetilde{C}}
\newcommand{\Rt}{\widetilde{R}}
\newcommand{\legendre}[2]{\ensuremath{\left( \frac{#1}{#2} \right) }}
\newcommand{\dlegendre}[2]{\ensuremath{\left( \dfrac{#1}{#2} \right) }}
\newcommand{\htk}{\widehat{t_k}}
\def\abs#1{\left\vert{#1}\right\vert}
\def\O_K{{\Cal{O}_{K}}}
\def\O_F{{\Cal{O}_{F}}}
\def\N_F{{\Cal{N}_{F/\Q}}}
\begin{document}

\title{Efficient computation of the overpartition function and applications
}

\author[A. Barquero-Sanchez et al.]{Adrian Barquero-Sanchez\orcidlink{0000-0001-7847-2938},  Gabriel Collado-Valverde,  Nathan C. Ryan\orcidlink{0000-0003-4947-586X},  Eduardo Salas-Jimenez\orcidlink{0009-0003-0664-611X},  Nicolás Sirolli\orcidlink{0000-0002-0603-4784} and Jean Carlos Villegas-Morales\orcidlink{0009-0004-8933-0627}}

\address{Escuela de Matem\'atica, Universidad de Costa Rica, San Jos\'e 11501, Costa Rica}
\email{adrian.barquero\_s@ucr.ac.cr}

\address{Escuela de Matem\'atica, Universidad de Costa Rica, San Jos\'e 11501, Costa Rica}
\email{gabriel.collado@ucr.ac.cr}

\address{Department of Mathematics, Bucknell University, Lewisburg, Pennsylvania 17837} 
\email{nathan.ryan@bucknell.edu}

\address{Escuela de Matem\'atica, Universidad de Costa Rica, San Jos\'e 11501, Costa Rica}
\email{eduardo.salas@ucr.ac.cr}

\address{Departamento de Matemática - FCEyN - UBA and IMAS - CONICET, Pabellón I, Ciudad Universitaria, Ciudad Autónoma de Buenos Aires (1428), Argentina}
\email{nsirolli@dm.uba.ar}

\address{Escuela de Matem\'atica, Universidad de Costa Rica, San Jos\'e 11501, Costa Rica}
\email{jean.villegas@ucr.ac.cr}

\begin{abstract}
In this paper we develop a method to calculate the overpartition function
efficiently using a Hardy-Rademacher-Ramanujan type formula, and we use this
method to find many new Ramanujan-style congruences, whose existence is predicted by Treneer and a few of which were first discovered by Ryan, Scherr, Sirolli and Treneer.
\end{abstract}

\maketitle

\section{Introduction}

The representation of integers as sums of special families of integers has been a central topic in number theory since antiquity. Among these, the ways in which a positive integer $n$ can be expressed as a sum of positive integers has been the object of intense study since the early 1900's. Each such representation is called a \textit{partition} of $n$, and the number of different partitions of $n$ (where the order of the summands is not taken into account) is given by the partition function $p(n)$. For example, since the partitions of $4$ are
\begin{align*}
&4\\
&3 + 1\\
&2 + 2\\
&2 + 1 + 1\\
&1 + 1 + 1 + 1
\end{align*}
we have $p(4) = 5$.

It is well known that the values of the partition function grow very fast. For example $p(200) = 3972999029388$. In fact, in a landmark paper from 1918, Hardy and Ramanujan \cite{HR18} proved that $p(n)$ satisfies the asymptotic formula
\begin{align*}
p(n) \sim \frac{1}{4n\sqrt{3}} e^{\pi \sqrt{2n/3}} \quad \text{as $n \to \infty$.}
\end{align*}

Another milestone for the subject, also from Ramanujan (\cite{Ramanujan21}), is the statement of the famous congruences
\begin{align*}
p(5n+4)&\equiv 0\pmod{5},\\
p(7n+5)&\equiv 0\pmod{7}, \;\mathrm{and}\\
p(11n+6)&\equiv 0\pmod{11}.
\end{align*}
This motivated the challenge of finding (families of) congruences for the partition function in arithmetic progressions.
We refer to \cite{ahlgrenono01} for details on this subject.


In this paper we study a variant of the integer partitions which are called \textit{overpartitions}. These were introduced by Corteel and Lovejoy in \cite{CL04}. More precisely, if $n$ is a positive integer, then an \textit{overpartition} of $n$ is a representation of $n$ as a sum of positive integers but with the added possibility that the first occurrence of each number in the representation may be overlined. For example, the overpartitions of $4$ are

\begin{center}
\begin{minipage}{0.2\textwidth}
\begin{flushright}
\begin{align*}
&4\\
&3 + 1\\
&2 + 2\\
&2 + 1 + 1\\
&1 + 1 + 1 + 1
\end{align*}
\end{flushright}
\end{minipage}
\begin{minipage}{0.2\textwidth}
\begin{flushleft}
\begin{align*}
&\overline{4}\\
&\overline{3} + 1\\
&\overline{2} + 2\\
&\overline{2} + 1 + 1\\
&\overline{1} + 1 + 1 + 1
\end{align*}
\end{flushleft}
\end{minipage}
\begin{minipage}{0.2\textwidth}
\begin{flushleft}
\begin{align*}
&\\
&\overline{3} + \overline{1}\\
&\\
&\overline{2} + \overline{1} + 1\\
&
\end{align*}
\end{flushleft}
\end{minipage}
\begin{minipage}{0.2\textwidth}
\begin{flushleft}
\begin{align*}
&\\
&3 + \overline{1}\\
&\\
&2 + \overline{1} + 1\\
&
\end{align*}
\end{flushleft}
\end{minipage}
\end{center}
\vspace{0.3cm}
The function that counts the number of overpartitions of an integer $n$ is denoted by $\ov(n)$ and is called the \textit{overpartition function}. Thus, the previous example shows that $\ov(4) = 14$.

As Corteel and Lovejoy explain in \cite{CL04}, overpartitions constitute important combinatorial structures in the study of $q$-series identities and hypergeometric series. In particular, they have been used to give combinatorial interpretations for different $q$-series identities. For example, some works in this direction are \cite{hirschhorn2005arithmetic} and \cite{lovejoy2008rank}.

Overpartitions have also been studied in relation to topics of great interest in number theory and the theory of integer partitions, like ranks, cranks and mock theta functions, for example in \cite{BLO09}, \cite{ADSY17}, \cite{Lin20} and \cite{Zha21}, just to cite a few.
More recently, overpartitions have been linked directly to the number of partitions of an integer into different parts in \cite{Mer22}.

As in the case of the partition function, another important line of research in the study of the overpartition function is its divisibility properties. For example, it has been found that $\overline{p}(n)$ satisfies distinct congruences of Ramanujan-type similar to the ones that are known for the partition function $p(n)$; see, for example \cite{Treneer06}, \cite{Treneer08}, \cite{Chen}, \cite{Xia}, and \cite{RSST21}.


As for partitions, the generating function for overpartitions can be expressed as an infinite product, namely
\begin{equation}
    \label{eqn:etaq}   
	\sum_{n\geq 0} \ov(n)q^n =
	\prod_{n \geq 1}\frac{1-q^{2n}}{\left(1-q^n\right)^2}.
\end{equation}
Given a positive integer $n_0$, by expanding this infinite product with the adequate precision, we can obtain the values of $\ov(n)$ for $1 \leq n \leq n_0$.
This naive method is computationally expensive, and does not allow to compute individual, isolated values of $\ov(n)$ easily; the same holds when using the recursive formula which we give in Proposition~\ref{prop:recursive}.
The main goal of this article is to give an efficient algorithm for computing $\ov(n)$, analogous to the one given by Johansson in \cite{Joh12} for the partition function.  

\medskip

The main idea behind Johannson's article is to use the Hardy-Rademacher-Ramanujan formula, which gives $p(n)$ as a convergent infinite series.
By computing enough terms of this series with the adequate precision and rounding to the nearest integer he obtains $p(n)$.
To achieve this efficiently, he gives a simple formula for each term.

We replicate this process in the context of overpartitions. Firstly, we state a Hardy-Rademacher-Ramanujan formula which, leaving the details for Section~\ref{sect:HRR}, gives that
\begin{align}
\label{eqn:HRRintro}
\ov(n) = \frac{1}{4n} \sum_{\substack{k \geq 1\\ 2 \nmid k}} \frac{1}{\sqrt{k}}\, \A_k(n)\, U\left( \frac{\ct(n)}{k} \right).
\end{align}
Then we give in Theorem~\ref{thm:explicit-error-bound} a bound for the error obtained when truncating this series.
Finally, to make our calculations viable, we need to be able to compute efficiently its terms.
In order to accomplish this, we show in Theorem~\ref{Multiplicativity} that $\A_k(n)$ is, in a particular sense, multiplicative in $k$, using classical tools like Dedekind sums and results due to Rademacher and Whiteman.
Then we show in Theorem~\ref{P-Power}, using a result due to Salié, that when $q$ is a prime power $\A_q(n)$ can be computed by a simple formula.
These two theorems are proved in Section~\ref{sect:proofs}.

All of the above results are summarized in Algorithm~\ref{alg:overpartitions}, which computes $\ov(n)$.
While we do not focus on optimizing our code as Johansson does, despite the fact that one could, we do point out that we can calculate $\ov(10^{14})$: this is a number close to $4.31 \cdot 10^{1363748}$
(see Proposition~\ref{prop:growth}), whose last 50 digits are
\[
18854845964512314768846736319878009378857016552454.
\]
It is stored in \cite[v1.0]{code}.

We finally turn our attention to the application that motivated us from the beginning: to obtain explicit instances of the infinite families of congruences satisfied by $\ov(n)$ according to \cite[Proposition 1.5]{Treneer08}.
For this purpose we need to implement custom methods to compute the summands in \eqref{eqn:HRRintro}, which are real numbers, modulo small integers; otherwise the numbers get so big that the arithmetic operations slow down considerably and require an excessive amount of memory.
This is accomplished in Algorithm~\ref{alg:overpartitionsmodm}.
Based on the computations we carried out, we find over a hundred new congruences satisfied by $\ov(n)$, as stated in Theorems~\ref{thm:congruences1} and \ref{thm:congruences2}.
In particular we exhibit the first congruences modulo $7$, as far as we know.

\bigskip


\subsection*{Acknowledgments} The third author thanks the Fulbright Foundation for supporting him during his time in Costa Rica, without which this project would not have happened. The first author thanks the Centro de Investigación en Matemática Pura y Aplicada and the School of Mathematics of the University of Costa Rica for administrative help and support during this project.

\section{A recursive formula for the values of \texorpdfstring{$\ov(n)$}{pbar(n)}}
\label{sect:recursive}

The partition function can be computed recursively using pentagonal numbers (see, e.g. \cite[(1.2)]{Joh12}).
In this section we show that overpartitions satisfy a simpler recursion.

\begin{proposition}
\label{prop:recursive}
    Define $\ov{(0)} = 1$ and $\ov{(n)} = 0$ for every $n < 0$. Then for every $n \geq 1$ we have the recursive formula
    \begin{align}\label{Overpartition-recurrence}
        \ov(n)=2\sum_{k=1}^{\infty}(-1)^{k+1}\ov(n-k^2).
    \end{align}
\end{proposition}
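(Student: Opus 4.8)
The plan is to extract the recursion directly from the product formula \eqref{eqn:etaq} by isolating the numerator and denominator and recognizing each as a classical theta-type series. Write $P(q) = \sum_{n \geq 0} \ov(n) q^n = \prod_{n \geq 1}(1-q^{2n})/(1-q^n)^2$. First I would observe that
\[
\prod_{n \geq 1} (1-q^n)^2 \cdot P(q) = \prod_{n \geq 1}(1-q^{2n}),
\]
so it suffices to understand the power series expansions of both sides. For the right-hand side I would invoke Euler's pentagonal number theorem to get $\prod_{n\geq 1}(1-q^{2n}) = \sum_{j \in \Z} (-1)^j q^{j(3j-1)}$, but in fact a cleaner route avoids pentagonal numbers entirely: the key identity I want is
\[
\prod_{n \geq 1}\frac{(1-q^n)^2}{1-q^{2n}} = \prod_{n \geq 1}\frac{1-q^n}{1+q^n} = \sum_{k \in \Z}(-1)^k q^{k^2} = 1 + 2\sum_{k \geq 1}(-1)^k q^{k^2},
\]
which is one of the Jacobi triple product specializations (the generating function for $\varphi(-q)$, equivalently $\vartheta_4$). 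So the crux is the reciprocal relation
\[
P(q) \cdot \left(1 + 2\sum_{k \geq 1}(-1)^k q^{k^2}\right) = 1.
\]

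Granting that identity, the proof is a routine coefficient extraction. I would multiply out the left-hand side and read off the coefficient of $q^n$: writing $\theta(q) = 1 + 2\sum_{k\geq 1}(-1)^k q^{k^2}$, the relation $P(q)\theta(q) = 1$ gives, for $n \geq 1$,
\[
0 = [q^n]\bigl(P(q)\theta(q)\bigr) = \ov(n) + 2\sum_{k \geq 1}(-1)^k \ov(n-k^2),
\]
where the sum is finite since $\ov(m) = 0$ for $m < 0$ by the stated convention, and $\ov(0)=1$. Rearranging yields $\ov(n) = 2\sum_{k\geq 1}(-1)^{k+1}\ov(n-k^2)$, which is exactly \eqref{Overpartition-recurrence}. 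The base cases $\ov(0)=1$ and $\ov(n)=0$ for $n<0$ are built into the conventions, so nothing further is needed there.

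The main obstacle — really the only nontrivial point — is justifying the theta identity $\prod_{n\geq 1}(1-q^n)/(1+q^n) = \sum_{k\in\Z}(-1)^k q^{k^2}$. I would handle this by first rewriting $\prod(1+q^n) = \prod (1-q^{2n})/(1-q^n)$ to see that $\prod(1-q^n)/(1+q^n) = \prod (1-q^n)^2/(1-q^{2n})$, and then citing the Jacobi triple product
\[
\prod_{n \geq 1}(1-q^{2n})(1+zq^{2n-1})(1+z^{-1}q^{2n-1}) = \sum_{k \in \Z} z^k q^{k^2}
\]
specialized at $z = -1$, which collapses the left side to $\prod_{n\geq 1}(1-q^{2n})(1-q^{2n-1})^2 = \prod_{n\geq 1}(1-q^n)^2(1-q^{2n})^{-1}\cdot(1-q^{2n})^2 \cdot (1-q^{2n})^{-1}$ — more transparently, $\prod(1-q^{2n-1})^2 \prod(1-q^{2n}) = \prod(1-q^n)^2/\prod(1-q^{2n})$, giving the claim. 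An alternative, if one prefers to stay self-contained, is to prove $P(q)\theta(q)=1$ combinatorially via a sign-reversing involution on pairs consisting of an overpartition and a "square part" $\pm k^2$, but the triple-product citation is shorter and entirely standard. All manipulations take place in $\Z[[q]]$, so there are no convergence issues.
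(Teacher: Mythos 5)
Your proposal is correct and takes essentially the same route as the paper: both start from the product formula \eqref{eqn:etaq}, identify $1/\overline{f}(q)$ with the theta series $\sum_{k\in\Z}(-1)^k q^{k^2}$, and extract the recursion by comparing coefficients of $q^n$ in $\overline{f}(q)\cdot(1/\overline{f}(q))=1$. The only difference is cosmetic: the paper simply cites \cite[(2.2.12)]{Andrews} for the theta identity, whereas you derive it from the Jacobi triple product, but the argument is otherwise identical.
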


\begin{proof}
Consider the generating function of the overpartition function
$$\overline{f}(q) := \displaystyle{\sum_{n = 0}^{\infty} \ov(n)q^n}.$$
Using \eqref{eqn:etaq} and \cite[(2.2.12)]{Andrews} we get that
\begin{align}
\frac{1}{\overline{f}(q)} 
= \sum_{n = -\infty}^{\infty} (-1)^n q^{n^2} = 1 + 2\sum_{n = 1}^{\infty}(-1)^{n}q^{n^2}.
\end{align}
Then we can write
\begin{align*}
\frac{1}{\overline{f}(q)}
= \sum_{k = 0}^{\infty} b_k q^k,
\end{align*}
where 
\begin{align}
b_k = 
\begin{cases}\label{bk-definition}
1, & \text{if $k = 0$,}\\
2(-1)^n, & \text{if $k = n^2$ for some $n \in \Z_{\geq 1}$, and}\\
0, & \text{if $k$ is not a perfect square}.
\end{cases}
\end{align}
Hence, multiplying the series for $\overline{f}(q)$ and $1/\overline{f}(q)$ we obtain
\begin{align}\label{Series-product}
1 = \overline{f}(q) \cdot \frac{1}{\overline{f}(q)} = \left( \sum_{n = 0}^{\infty} \ov(n)q^n \right) \left( \sum_{k = 0}^{\infty} b_k q^k \right)=\sum_{n=0}^{\infty}c_{n}q^{n},
\end{align}
where 
\begin{align}\label{Cauchy-coefficients}
    c_{n}=\sum_{k=0}^{n}\ov(n-k)b_{k}.
\end{align}
Now, comparing coefficients in (\ref{Series-product}), we see that $c_0 = 1$ and $c_{n}=0$ for all $n\geq 1$. Moreover, solving for $\ov(n)$ in (\ref{Cauchy-coefficients}), i.e. solving for the $k = 0$ term, using the definition of $b_k$ given in (\ref{bk-definition}), and defining $\ov(0)=1$ and $\ov(n)=0$ if $n<0$, we obtain the recurrence relation
\eqref{Overpartition-recurrence},
valid for $n \geq 1$. 
\end{proof}

\begin{remark}
    We observe that although the fact that $\overline{p}(n)$ is even for every $n \geq 1$ can be seen combinatorially from the definition,
    the appearance of the factor $2$ in the recursive formula (\ref{Overpartition-recurrence}) clearly shows this fact.
\end{remark}

\section{A Hardy-Ramanujan-Rademacher type formula for overpartitions}
\label{sect:HRR}

The partition function can be given as a convergent infinite series.
More precisely, Hardy-Ramanujan and Rademacher showed in \cite{HR18} and \cite{R38} that
\begin{align}
\label{HRR}
p(n) =
    \frac{4}{24n-1}
    \sum_{k \geq 1} \sqrt{\tfrac{3}{k}} \,
    A_k(n) \,
    U\left(\frac{C(n)}{k}\right).
\end{align}
Here we denote
\begin{align}
A_k(n) := \sum_{\substack{0 \leq h < k\\ (h, k) = 1}}
    \omega(h, k) \, 
    e^{-2\pi i n h/k},
\end{align}
where $\omega(h, k) := e^{\pi i s(h, k)}$,
and $s(h, k)$ denotes the Dedekind sum
\begin{align}
s(h, k) := \sum_{r = 1}^{k - 1} \frac{r}{k} \left( \frac{hr}{k} - \left \lfloor \frac{hr}{k} \right \rfloor - \frac{1}{2} \right).
\end{align}
Furthermore, we denote
\begin{align*}
U(x) & := \cosh{x} - \frac{\sinh{x}}{x}, \\
C(n) & := \frac{\pi}6 \sqrt{24n-1}.
\end{align*}

Similarly, in relation to the overpartition function $\overline{p}(n)$, in 1939 Zuckerman \cite[Equations (8.36) and (8.53)]{Zuc39} gave explicit formulas for the Fourier coefficients of the generating function of the overpartition function. In particular, his formulas show that
\begin{align}\label{Zuckerman}
\ov(n) = \frac{1}{2\pi} \sum_{\substack{k \geq 1\\ 2 \nmid k}} \sqrt{k} \widetilde{A}_k(n) \frac{d}{dn}\left( \frac{\sinh{\left( \frac{\pi \sqrt{n}}{k} \right)}}{\sqrt{n}} \right),
\end{align}
where we let
\begin{align}\label{Aktilde}
\widetilde{A}_k(n) := \sum_{\substack{0 \leq h < k\\ (h, k) = 1}}
    \frac{\omega(h, k)^2}{\omega(2h, k)}\,
    e^{-2\pi i n h/k}.
\end{align}

Now a simple calculation shows that 
\begin{align*}
\frac{d}{dn}\left( \frac{\sinh{\left( \frac{\pi \sqrt{n}}{k} \right)}}{\sqrt{n}} \right) = \frac{\pi}{2kn} U\left( \frac{\ct(n)}{k} \right),
\end{align*}
where $\ct(n) := \pi \sqrt{n}$.
Therefore, the Hardy-Ramanujan-Rademacher type formula (\ref{Zuckerman}) can be rewritten more succinctly, and similarly to \eqref{HRR}, as
\begin{align}\label{Overpartition-Rademacher}
\ov(n) = \frac{1}{4n} \sum_{\substack{k \geq 1\\ 2 \nmid k}} \frac{1}{\sqrt{k}}\, \A_k(n)\, U\left( \frac{\ct(n)}{k} \right).
\end{align}
This is the formula that we will use along this article.

\section{Truncating the series for \texorpdfstring{$\ov(n)$}{pbar(n)} and an explicit bound for the error}

In this section we will prove an explicit bound for the error term $\Rt(n, N)$ obtained when truncating the infinite series (\ref{Overpartition-Rademacher}). The explicit bound we prove below in Theorem \ref{thm:explicit-error-bound} will then be used in Algorithm \ref{alg:overpartitions}.  Strictly speaking, our implementations could use bounds due to Engel \cite{Eng17}, but the bound we prove below makes our computations more efficient.

Thus, let $\Rt(n, N)$ be defined by 
\begin{align}\label{R-Definition1}
\ov(n) = \frac{1}{4n} \sum_{\substack{k = 1\\ 2 \nmid k}}^N \frac{1}{\sqrt{k}} \,\A_k(n) \,U\left( \frac{\ct(n)}{k} \right) + \Rt(n, N),
\end{align}
that is, 
\begin{align}\label{R-series}
    \Rt(n, N) := \frac{1}{4n} \sum_{\substack{k = N+1\\ 2 \nmid k}}^{\infty} \frac{1}{\sqrt{k}} \,\A_k(n) \,U\left( \frac{\ct(n)}{k} \right).
\end{align}

Recently, Engel \cite[Theorem 2.3]{Eng17} proved that the error term $\Rt(n, N)$ is bounded by
\begin{align}
|\Rt(n, N)| \leq \frac{N^{5/2}}{\pi n^{3/2}} \sinh{\left( \frac{\pi \sqrt{n}}{N} \right)}.
\end{align}
Now, as can be checked by using the asymptotic formula $\sinh{(x)} = x(1 + o(1))$ as $x \to 0$,
we have that
$$
\frac{N^{5/2}}{\pi n^{3/2}} \sinh{\left( \frac{\pi \sqrt{n}}{N} \right)} \to \infty \quad \text{as $N \to \infty$}.
$$
This shows that Engel's bound is rather weak since we know that $\Rt(n, N) \to 0$ as $N \to \infty$.

Instead, by employing a method similar to the one used by Rademacher in \cite{R38}, we will now prove a stronger bound for the error term $\Rt(n, N)$.
Note the similarity between \eqref{eqn:error_bound} and the bound (8.3) in \cite{R38}.

\begin{theorem}\label{thm:explicit-error-bound}
    Let $n \geq 1$ be a fixed positive integer. Then for every $N \geq 1$, the error term $\Rt(n, N)$ satisfies the upper bound
\begin{align}\label{eqn:error_bound}
    |\Rt(n, N)| \leq M(n, N),
\end{align}
where 
\begin{align}\label{eqn:error-bound-definition}
    M(n, N) := \frac{1}{4\pi} \left( \frac{N+1}{n} \right)^{3/2} \left( \frac{\pi \sqrt{n}}{N+1} \cosh{ \left( \frac{\pi \sqrt{n}}{N+1} \right) } + (2N + 1) \sinh{ \left( \frac{\pi \sqrt{n}}{N+1} \right) } - 2\pi \sqrt{n} \right).
\end{align}
Moreover, the bound $M(n, N)$ satisfies the asymptotic formula
\begin{align}
    \label{eqn:error_bound_asymptotic}
    M(n, N) = \frac{\pi^2}{12} \frac{1}{\sqrt{N + 1}} (1 + o(1)) \quad \text{as $N \to \infty$}.
\end{align}
\end{theorem}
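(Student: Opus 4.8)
The plan is to bound $|\Rt(n,N)|$ term by term in the series \eqref{R-series}, following the strategy Rademacher uses in \cite{R38} to establish his bound (8.3). First I would bound the arithmetic factor: since $|\omega(h,k)^2/\omega(2h,k)| = 1$ and there are $\varphi(k) \leq k$ coprime residues, the triangle inequality gives $|\A_k(n)| \leq k$. Next I would bound the analytic factor $U(\ct(n)/k)$. Writing $x = \ct(n)/k = \pi\sqrt{n}/k$, we have $U(x) = \cosh x - \sinh x / x$; using the power series $\cosh x = \sum_{m \geq 0} x^{2m}/(2m)!$ and $\sinh x / x = \sum_{m \geq 0} x^{2m}/(2m+1)!$, each coefficient of $U(x)$ is nonnegative and equals $\tfrac{1}{(2m)!} - \tfrac{1}{(2m+1)!} = \tfrac{2m}{(2m+1)!}$, so $U(x) = \sum_{m \geq 1} \tfrac{2m}{(2m+1)!} x^{2m}$, and this is a clean, manifestly positive expression to work with.

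The key step is then to estimate the tail $\sum_{k > N,\ 2\nmid k} \tfrac{1}{\sqrt k} \cdot k \cdot U(\pi\sqrt n / k) = \sum_{k>N,\ 2\nmid k} \sqrt k\, U(\pi\sqrt n/k)$. I would substitute the series for $U$ and interchange the order of summation, obtaining $\sum_{m \geq 1} \tfrac{2m}{(2m+1)!}(\pi\sqrt n)^{2m} \sum_{k > N,\ 2\nmid k} k^{1/2 - 2m}$. For $m \geq 1$ the inner exponent $1/2 - 2m \leq -3/2 < -1$, so each inner sum converges; I would bound the sum over odd $k > N$ by an integral comparison, $\sum_{k > N,\ 2\nmid k} k^{1/2-2m} \leq \int_N^\infty t^{1/2 - 2m}\,dt = \tfrac{N^{3/2 - 2m}}{2m - 3/2}$ (or the slightly sharper version with $N+1$, replacing $N$ by $N+1$ throughout, which is what yields \eqref{eqn:error-bound-definition}). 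Plugging back in gives $\sum_{m\geq 1} \tfrac{2m}{(2m+1)!}(\pi\sqrt n)^{2m}\cdot \tfrac{(N+1)^{3/2-2m}}{2m-3/2}$. Factoring out $(N+1)^{3/2}$ and writing $y = \pi\sqrt n/(N+1)$, the remaining sum is $\sum_{m\geq 1} \tfrac{2m}{(2m+1)!\,(2m - 3/2)} y^{2m} = \sum_{m \geq 1} \tfrac{4m}{(2m+1)!\,(4m-3)} y^{2m}$; the bracketed closed form in \eqref{eqn:error-bound-definition}, namely $y\cosh y + (2N+1)\sinh y - 2\pi\sqrt n$, should be recovered by recognizing this as a combination of the series for $\cosh y$, $\sinh y$, and lower-order terms — I would verify the identity by matching Taylor coefficients. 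Dividing by $4n$ and using $|\A_k(n)| \leq k$ produces exactly $M(n,N)$.

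The main obstacle I anticipate is the bookkeeping in the last step: producing the exact closed form $\tfrac{\pi\sqrt n}{N+1}\cosh(\cdot) + (2N+1)\sinh(\cdot) - 2\pi\sqrt n$ rather than merely a convergent-series bound. This requires carefully splitting $\tfrac{2m}{2m - 3/2}$ (equivalently $\tfrac{4m}{4m-3}$) via partial fractions into a constant part plus a part proportional to $\tfrac{1}{4m-3}$, then identifying the constant-part contribution with $\cosh$ and $\sinh$ series and controlling the residual $\tfrac{1}{4m-3}$-weighted series; the appearance of $(2N+1)$ as a coefficient suggests the integral-comparison constant $2m - 3/2$ must be handled in tandem with the factor of $N+1$ pulled out, so the algebra has to be done with some care to land on precisely \eqref{eqn:error-bound-definition} and not a looser relative. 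Finally, for the asymptotic \eqref{eqn:error_bound_asymptotic}, I would set $y = \pi\sqrt n/(N+1) \to 0$ and Taylor-expand: $y\cosh y \sim y + \tfrac{y^3}{2}$, $(2N+1)\sinh y \sim (2N+1)(y + \tfrac{y^3}{6})$, so the leading terms $y + 2Ny + y = 2(N+1)y = 2\pi\sqrt n$ cancel the $-2\pi\sqrt n$, leaving the $y^3$ terms as the dominant contribution; tracking constants, $M(n,N) \sim \tfrac{1}{4\pi}(N+1)^{3/2} n^{-3/2} \cdot c\, y^3$ for the appropriate constant $c$, and since $y^3 = \pi^3 n^{3/2}/(N+1)^3$ the powers of $n$ and $(N+1)$ collapse to give $M(n,N) = \tfrac{\pi^2}{12}(N+1)^{-1/2}(1+o(1))$, where the constant $\tfrac{\pi^2}{12}$ should emerge from the combination of the $\tfrac{y^3}{2}$ and $(2N+1)\tfrac{y^3}{6} \sim \tfrac{(N+1)y^3}{3}$ terms (noting $\tfrac{1}{2} + \tfrac{1}{3}\cdot\tfrac{1}{1}$ needs the $N+1$ weighting folded in correctly) — I would double-check this constant against $\zeta(2) = \pi^2/6$, which is the natural source of such a constant in Rademacher-type tail estimates.
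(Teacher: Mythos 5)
Your plan matches the paper's in outline — bound $|\widetilde{A}_k(n)| \le k$, expand the analytic factor in a power series, interchange sums, and estimate the odd tail by integral comparison — but the tail estimate, as you set it up, cannot reach the stated $M(n,N)$. The paper first reindexes the odd tail as $\sum_{j\ge\lceil N/2\rceil}(2j+1)^{-(2v-1/2)}$, so the parity is absorbed into the integrand, and then applies the \emph{two-term} integral test $\sum_{j\ge T}f(j)\le f(T)+\int_T^\infty f(x)\,dx$. This produces two pieces, $A(v,N)=(N+1)^{-(2v-1/2)}$ and $B(v,N)=(4v-3)^{-1}(N+1)^{-(2v-3/2)}$; the $A$-series sums exactly to a multiple of $H(y)=y\cosh y-\sinh y$, and the $B$-series, after the elementary bound $v/(4v-3)\le 1$, sums to a multiple of $\sinh y - y$. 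Adding the two closed forms gives precisely $M(n,N)$.

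Your version omits both refinements. Bounding the odd tail by the full integral $\int_N^\infty t^{1/2-2m}\,dt$ without the $k=2j+1$ reindexing throws away roughly a factor of $2$, and dropping the boundary term $f(T)$ kills the $A$-series entirely, which is the only source of the $y\cosh y$ contribution in \eqref{eqn:error-bound-definition}. What you are left with is a single series whose $m$-th weight on $\tfrac{y^{2m}}{(2m+1)!}$ is $\tfrac{4m}{4m-3}$, independent of $N$; the corresponding weight extracted from $M(n,N)$ is $\tfrac{2m}{N+1}+2$. These are genuinely different — at $m=1$ yours is $4$ while $\tfrac{2}{N+1}+2<4$ for every $N\ge1$ — so the ``identity'' you propose to verify by matching Taylor coefficients is false. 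Following your route to the end gives an asymptotic bound $\sim\tfrac{\pi^2}{6}(N+1)^{-1/2}$, twice \eqref{eqn:error_bound_asymptotic}, and the residual $\tfrac{1}{4m-3}$-weighted series you anticipate has no elementary closed form. To land on $M(n,N)$ exactly you need both the reparametrization and the boundary term. (Your sketch of the asymptotic \eqref{eqn:error_bound_asymptotic} is otherwise sound once $M(n,N)$ is in hand; the aside about $\zeta(2)$ is a red herring — the constant $\pi^2/12$ comes from the Taylor coefficients of $\cosh$ and $\sinh$, not from a zeta value.)
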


\begin{proof}
We use the formula 
$$
\Rt(n, N) = \frac{1}{2\pi} \sum_{\substack{k = N + 1\\ 2 \nmid k}}^{\infty}\sqrt{k} \widetilde{A}_k(n) \frac{d}{dn}\left( \frac{\sinh{\left( \frac{\pi \sqrt{n}}{k} \right)}}{\sqrt{n}} \right)
$$
for the error term. We will now bound $\widetilde{A}_k(n)$ and the derivative term. Then, we will bound the resulting series by using the triangle inequality.

Now, observe that trivially we have
$$
\left| \widetilde{A}_k(n) \right| = \left| \sum_{\substack{0 \leq h < k\\ (h, k) = 1}} \frac{\omega(h, k)^2}{\omega(2h, k)}\, e^{-2\pi i n h/k} \right| \leq \sum_{\substack{0 \leq h < k\\ (h, k) = 1}} 1 = \varphi(k) \leq k.
$$
Next, for the derivative we note that using the Taylor expansion at $x = 0$ for $\sinh{(x)}$, we have
$$
\frac{\sinh{\left( \frac{\pi \sqrt{n}}{k} \right)}}{\sqrt{n}} = \sum_{v = 0}^{\infty} \left( \frac{\pi}{k} \right)^{2v + 1} \frac{n^v}{(2v+1)!}.
$$
Therefore, differentiating this and substituting back in the formula for $\Rt(n, N)$ we have
\begin{align*}
    |\Rt(n, N)| &\leq \frac{1}{2\pi} \sum_{\substack{k = N + 1\\ 2 \nmid k}}^{\infty}\sqrt{k} \left| \widetilde{A}_k(n) \right|  \left|\frac{d}{dn}\left( \frac{\sinh{\left( \frac{\pi \sqrt{n}}{k} \right)}}{\sqrt{n}} \right) \right| \\
    &= \frac{1}{2\pi} \sum_{\substack{k = N + 1\\ 2 \nmid k}}^{\infty} k^{3/2}  \sum_{v = 1}^{\infty} \left( \frac{\pi}{k} \right)^{2v + 1} \frac{vn^{v - 1}}{(2v+1)!} \\
    &= \frac{1}{2\pi} \sum_{v = 1}^{\infty} \frac{v \pi^{2v + 1}}{(2v + 1)!} n^{v - 1} \sum_{\substack{k = N + 1\\ 2 \nmid k}}^{\infty} \frac{1}{k^{2v - 1/2}}.
\end{align*}
Now, note that we have
\begin{align*}
    \sum_{\substack{k = N + 1\\ 2 \nmid k}}^{\infty} \frac{1}{k^{2v - 1/2}} = \sum_{k = \left \lceil \frac{N}{2} \right \rceil}^{\infty} \frac{1}{(2k+1)^{2v - 1/2}}.
\end{align*}
Now, recall that the integral test for convergence says that if $f \colon [T, \infty) \to \R$ is a monotone decreasing function, with $T \in \Z_{\geq 1}$, then 
$$
\sum_{k = T}^{\infty} f(k) \leq f(T) + \int \limits_{T}^{\infty} f(x) \, dx,
$$
provided that the integral converges. Then, applying this to the function $f(x) := \dfrac{1}{(2x + 1)^{2v - 1/2}}$, we get
\begin{align*}
    \sum_{k = \left \lceil \frac{N}{2} \right \rceil}^{\infty} \frac{1}{(2k+1)^{2v - 1/2}} &\leq \frac{1}{\left( 2 \left \lceil \frac{N}{2} \right \rceil + 1 \right)^{2v - 1/2}} + \int \limits_{ \left \lceil \frac{N}{2} \right \rceil }^{\infty} \dfrac{1}{(2x + 1)^{2v - 1/2}} \, dx \\
    &= \frac{1}{\left( 2 \left \lceil \frac{N}{2} \right \rceil + 1 \right)^{2v - 1/2}} + \frac{1}{4v - 3} \frac{1}{\left( 2 \left \lceil \frac{N}{2} \right \rceil + 1 \right)^{2v - 3/2}} \\
    &\leq \frac{1}{(N+1)^{2v - 1/2}} + \frac{1}{4v - 3} \frac{1}{(N+1)^{2v - 3/2}}.
\end{align*}
Thus, we let 
$$
A(v, N) := \frac{1}{(N+1)^{2v - 1/2}} \quad \text{and} \quad B(v, N) := \frac{1}{4v - 3} \frac{1}{(N+1)^{2v - 3/2}}.
$$

Then we have that
\begin{align}
    |\Rt(n, N)| &\leq \frac{1}{2\pi} \sum_{v = 1}^{\infty} \frac{v \pi^{2v + 1}}{(2v + 1)!} n^{v - 1} \sum_{k = \left \lceil \frac{N}{2} \right \rceil}^{\infty} \frac{1}{(2k+1)^{2v - 1/2}} \nonumber \\
    &= \frac{1}{2\pi} \sum_{v = 1}^{\infty} \frac{v \pi^{2v + 1}}{(2v + 1)!} n^{v - 1} \left( A(v, N) + B(v, N) \right) \nonumber \\
    &= \frac{1}{2\pi} \sum_{v = 1}^{\infty} \frac{v \pi^{2v + 1}}{(2v + 1)!} n^{v - 1} A(v, N) + \frac{1}{2\pi} \sum_{v = 1}^{\infty} \frac{v \pi^{2v + 1}}{(2v + 1)!} n^{v - 1} B(v, N).
    \label{eqn:two_series}
\end{align}
We now analyze separately each of the two series in the last expression.

For the first series in (\ref{eqn:two_series}) we have
\begin{align}\label{eqn:error-A-bound}
    \sum_{v = 1}^{\infty} \frac{v \pi^{2v + 1}}{(2v + 1)!} n^{v - 1} A(v, N) &= \sum_{v = 1}^{\infty} \frac{v \pi^{2v + 1}}{(2v + 1)!} n^{v - 1} \frac{1}{(N+1)^{2v - 1/2}} \notag \\
    &= \left( \frac{N+1}{n} \right)^{3/2} \sum_{v = 1}^{\infty} \frac{v}{(2v+1)!} \left( \frac{\pi \sqrt{n}}{N+1} \right)^{2v + 1} \notag \\
    &= \frac{1}{2} \left( \frac{N+1}{n} \right)^{3/2} H \left( \frac{\pi \sqrt{n}}{N+1} \right),
\end{align}
where $H(x):= x \cosh{x} - \sinh{x}$ satisfies 
$$
H(x) = x^2 \frac{d}{dx} \left( \frac{\sinh{x}}{x} \right) = 2\sum_{v = 1}^{\infty} \frac{v}{(2v + 1)!} x^{2v + 1}
$$
for every $x \in \R$.

Similarly, for the second series in \eqref{eqn:two_series}, we have
\begin{align}\label{eqn:error-B-bound}
    \sum_{v = 1}^{\infty} \frac{v \pi^{2v + 1}}{(2v + 1)!} n^{v - 1} B(v, N) &= \sum_{v = 1}^{\infty} \frac{v \pi^{2v + 1}}{(2v + 1)!} n^{v - 1} \frac{1}{4v - 3} \frac{1}{(N+1)^{2v - 3/2}} \notag \\
    &= \sum_{v = 1}^{\infty} \frac{v}{4v - 3} \frac{ \pi^{2v + 1}}{(2v + 1)!} n^{v - 1} \frac{1}{(N+1)^{2v - 3/2}} \notag \\
    &\leq \sum_{v = 1}^{\infty} \frac{\pi^{2v + 1} (\sqrt{n})^{2v - 2}}{(2v + 1)! (N+1)^{2v - 3/2}} \notag \\
    &= \frac{(N+1)^{5/2} }{n^{3/2}} \sum_{v = 1}^{\infty} \frac{1}{(2v+1)!} \left( \frac{\pi \sqrt{n}}{N+1} \right)^{2v + 1} \notag \\
    &=  \frac{(N+1)^{5/2} }{n^{3/2}} \left( \sinh{ \left( \frac{\pi \sqrt{n}}{N+1} \right) } - \frac{\pi \sqrt{n}}{N+1}   \right).
\end{align}

Thus, combining \eqref{eqn:two_series}, \eqref{eqn:error-A-bound} and \eqref{eqn:error-B-bound}, we get

\begin{align*}
    &|\widetilde{R}(n, N)| \leq \frac{1}{2\pi} \frac{1}{2} \left( \frac{N+1}{n} \right)^{3/2} H \left( \frac{\pi \sqrt{n}}{N+1} \right) + \frac{1}{2\pi} \frac{(N+1)^{5/2} }{n^{3/2}} \left( \sinh{ \left( \frac{\pi \sqrt{n}}{N+1} \right) } - \frac{\pi \sqrt{n}}{N+1}   \right) \\
    &= \frac{1}{4\pi} \left( \frac{N+1}{n} \right)^{3/2} \left( \frac{\pi \sqrt{n}}{N+1} \cosh{ \left( \frac{\pi \sqrt{n}}{N+1} \right)} - \sinh{\left( \frac{\pi \sqrt{n}}{N+1}\right)} + 2(N+1) \sinh{\left( \frac{\pi \sqrt{n}}{N+1}\right)} - 2\pi\sqrt{n} \right) \\
    &= \frac{1}{4\pi} \left( \frac{N+1}{n} \right)^{3/2} \left( \frac{\pi \sqrt{n}}{N+1} \cosh{ \left( \frac{\pi \sqrt{n}}{N+1} \right)} + (2N+1) \sinh{\left( \frac{\pi \sqrt{n}}{N+1}\right)} - 2\pi\sqrt{n} \right).
\end{align*}
This proves \eqref{eqn:error_bound}.

Now we will analyze the asymptotic growth of $M(n,N)$ as $N \to \infty$.
Observe that 
\begin{align*}
    \cosh(x) = 1 + \frac{x^2}{2} + x^2 o(1) \quad \text{and} \quad \sinh(x) = x + \frac{x^3}{6} + x^3 o(1)
\end{align*}
as $x \to 0$. Therefore, this implies that
\begin{align*}
    \cosh{ \left( \frac{\pi \sqrt{n}}{N+1} \right) } & = 1 + \frac{1}{2} \left( \frac{\pi \sqrt{n}}{N+1} \right)^2 + \left( \frac{\pi \sqrt{n}}{N+1} \right)^2 o(1), \qquad \text{and}\\
    \sinh{ \left( \frac{\pi \sqrt{n}}{N+1} \right) } & = \frac{\pi \sqrt{n}}{N+1}  + \frac{1}{6} \left( \frac{\pi \sqrt{n}}{N+1} \right)^3 + \left( \frac{\pi \sqrt{n}}{N+1} \right)^3 o(1)
\end{align*}
as $N\to\infty$. Using these expansions we see that
\begin{align}\label{eqn:error-proof-asymptotic}
    &\qquad \frac{\pi \sqrt{n}}{N+1} \cosh{ \left( \frac{\pi \sqrt{n}}{N+1} \right) } + (2N+1) \sinh{ \left( \frac{\pi \sqrt{n}}{N+1} \right) } - 2\pi \sqrt{n} \notag \\
    &= \frac{\pi \sqrt{n}}{N+1} \left( 1 + \frac{1}{2} \left( \frac{\pi \sqrt{n}}{N+1} \right)^2 + \left( \frac{\pi \sqrt{n}}{N+1} \right)^2 o(1) \right) \notag \\
    &+ (2N+1) \left(  \frac{\pi \sqrt{n}}{N+1}  + \frac{1}{6} \left( \frac{\pi \sqrt{n}}{N+1} \right)^3 + \left( \frac{\pi \sqrt{n}}{N+1} \right)^3 o(1) \right) - 2\pi \sqrt{n} \notag \\
    &= \frac{1}{2} \left( \frac{\pi \sqrt{n}}{N+1} \right)^3 + \left( \frac{\pi \sqrt{n}}{N+1} \right)^3 o(1) \ + \frac{2N+1}{6} \left( \frac{\pi \sqrt{n}}{N+1} \right)^3 + (2N+1) \left( \frac{\pi \sqrt{n}}{N+1} \right)^3 o(1) \notag \\
    &= \frac{\pi^3 n^{3/2}}{3} \frac{1}{(N+1)^2} + \frac{1}{(N+1)^2} o(1),
\end{align}
where we have used the facts that 
$$
\frac{\pi \sqrt{n}}{N+1} + (2N + 1) \frac{\pi \sqrt{n}}{N+1} - 2\pi \sqrt{n} = 0 \quad \text{and} \quad \dfrac{1}{(N+1)^3} = \dfrac{1}{(N+1)^2} o(1)
$$
as $N \to \infty$. Moreover, in obtaining the expression (\ref{eqn:error-proof-asymptotic}) we wrote $2N+1 = 2(N+1) - 1$ and distributed the products in order to simplify the formula.

Hence, using the expansion (\ref{eqn:error-proof-asymptotic}) in the definition of $M(n, N)$ we get that
\begin{align*}
    M(n, N) &= \frac{1}{4\pi} \left( \frac{N+1}{n} \right)^{3/2} \left( \frac{\pi^3 n^{3/2}}{3} \frac{1}{(N+1)^2} + \frac{1}{(N+1)^2} o(1) \right) \\
    &=\frac{\pi^2}{12} \frac{1}{\sqrt{N+1}}(1 + o(1))
\end{align*}
as $N \to \infty$. 
\end{proof}

\section{The order of growth of \texorpdfstring{$\ov(n)$}{pbar(n)}}

As was mentioned in the introduction, it is known that the order of growth of the partition function $p(n)$ is given by
$$
p(n) \sim \frac{1}{4n\sqrt{3}} e^{\pi \sqrt{2n/3}} \quad \text{as $n \to \infty$.}
$$
In this section, using the bound for the error term that we proved in Section 3, we compute the order of growth of the overpartition function.

\begin{proposition}
\label{prop:growth}
The overpartition function $\overline{p}(n)$ has order of growth
\begin{align}\label{Growth}
\ov(n) \sim \frac{e^{\pi \sqrt{n}}}{8n}
\end{align}
as $n \to \infty$
\end{proposition}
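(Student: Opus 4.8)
The plan is to extract the $k=1$ term from the Hardy-Ramanujan-Rademacher formula \eqref{Overpartition-Rademacher} and show that everything else is negligible, using the error bound from Theorem~\ref{thm:explicit-error-bound} with $N = 1$. First I would compute $\A_1(n)$ directly from the definition \eqref{Aktilde}: the only index is $h = 0$, and since $s(0,1) = 0$ we have $\omega(0,1) = 1$, so $\A_1(n) = 1$. Hence \eqref{R-Definition1} with $N = 1$ reads
\begin{align*}
\ov(n) = \frac{1}{4n}\, U\!\left( \ct(n) \right) + \Rt(n, 1) = \frac{1}{4n}\, U\!\left( \pi \sqrt{n} \right) + \Rt(n, 1).
\end{align*}

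Next I would analyze the main term. Since $U(x) = \cosh x - \dfrac{\sinh x}{x}$ and $\cosh x = \tfrac{1}{2} e^{x}(1 + o(1))$ while $\dfrac{\sinh x}{x} = O\!\left( \tfrac{e^x}{x} \right)$ as $x \to \infty$, we get $U(\pi\sqrt{n}) = \tfrac{1}{2} e^{\pi\sqrt{n}} (1 + o(1))$, so the $k=1$ term is $\dfrac{e^{\pi\sqrt{n}}}{8n}(1 + o(1))$. It then remains to prove $\Rt(n,1) = o\!\left( e^{\pi\sqrt{n}}/n \right)$. By Theorem~\ref{thm:explicit-error-bound},
\begin{align*}
|\Rt(n,1)| \leq M(n,1) = \frac{1}{4\pi}\left(\frac{2}{n}\right)^{3/2}\left( \frac{\pi\sqrt{n}}{2}\cosh\!\left(\frac{\pi\sqrt{n}}{2}\right) + 3\sinh\!\left(\frac{\pi\sqrt{n}}{2}\right) - 2\pi\sqrt{n} \right),
\end{align*}
and the dominant contribution on the right is $O\!\left( n^{-1/2} e^{\pi\sqrt{n}/2} \right)$ (the $\cosh$ and $\sinh$ terms each grow like $e^{\pi\sqrt{n}/2}$, times a polynomial factor). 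Dividing by $e^{\pi\sqrt{n}}/(8n)$ gives a bound of order $n^{1/2} e^{-\pi\sqrt{n}/2} \to 0$, so $\Rt(n,1)$ is indeed of smaller order than the main term. Combining, $\ov(n) = \dfrac{e^{\pi\sqrt{n}}}{8n}(1 + o(1))$, which is \eqref{Growth}.

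I do not expect a serious obstacle here: the argument is essentially the same "isolate the leading term, bound the tail by the error estimate" strategy used for $p(n)$. The only point requiring a little care is making the estimate on $M(n,1)$ explicit enough to see the exponential gap $e^{\pi\sqrt{n}/2}$ versus $e^{\pi\sqrt{n}}$; one can either keep track of the elementary asymptotics of $\cosh$ and $\sinh$ by hand, or invoke the asymptotic formula \eqref{eqn:error_bound_asymptotic} — though that is stated for fixed $n$ and $N \to \infty$, so for the present purpose (fixed $N=1$, $n \to \infty$) it is cleanest to bound $M(n,1)$ directly. Everything else is routine calculus.
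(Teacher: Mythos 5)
Your proposal is correct and follows essentially the same route as the paper: isolate the $k=1$ term via $\A_1(n)=1$, use $U(\pi\sqrt n)\sim \tfrac12 e^{\pi\sqrt n}$, and then bound $\Rt(n,1)$ by $M(n,1)$ directly to exhibit the exponential gap $e^{\pi\sqrt n/2}$ versus $e^{\pi\sqrt n}$. The paper's calculation actually gives the sharper $M(n,1)\sim \tfrac{\sqrt 2}{8n}e^{\pi\sqrt n/2}$ rather than your $O(n^{-1/2}e^{\pi\sqrt n/2})$, but your weaker bound still forces the ratio to $0$, so the argument goes through unchanged.
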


\begin{proof}
We will obtain the order of growth of the overpartition function from the infinite series (\ref{Overpartition-Rademacher}). In fact, as we will see, the dominant term is the first one in the series. Thus, taking $N = 1$ in formula (\ref{R-Definition1}) we get
\begin{align}
\ov(n) = \frac{1}{4n} \A_1(n) U(\pi \sqrt{n}) + \Rt(n, 1).
\end{align}
Now, from equation (\ref{Aktilde}), which gives the definition of $\widetilde{A}_k(n)$, we see that $\A_1(n) = 1$ for any $n \in \Z$. Also, note that
\begin{align}\label{U-formula}
U(x) = \cosh{x} - \frac{\sinh{x}}{x} = \frac{e^x}{2}\left( 1 + e^{-2x} + \frac{e^{-2x} - 1}{x} \right), 
\end{align}
and hence this shows that $U(x) \sim \dfrac{e^x}{2}$ as $x \to +\infty$. Therefore, the leading term satisfies
$$
\frac{1}{4n} \A_1(n) U(\pi \sqrt{n}) \sim \frac{e^{\pi \sqrt{n}}}{8n}
$$
as $n \to \infty$. Then, using this we have
\begin{align}\label{Order-Calculation}
\ov(n) \cdot \left( \frac{e^{\pi \sqrt{n}}}{8n} \right)^{-1} = \frac{2}{e^{\pi \sqrt{n}}} U(\pi \sqrt{n}) + \frac{8n}{e^{\pi \sqrt{n}}} \Rt(n, 1).
\end{align}
Now, for the first term in (\ref{Order-Calculation}), we know from \eqref{U-formula} that
\begin{align}\label{Order-Main-Term}
\frac{2}{e^{\pi \sqrt{n}}} U(\pi \sqrt{n}) \xrightarrow[n \to \infty]{} 1.
\end{align}

Next, from Theorem \ref{thm:explicit-error-bound} we know that $|\Rt(n, 1)| \leq M(n, 1)$, where $M(n, N)$ is defined in equation (\ref{eqn:error-bound-definition}). Then, a short calculation shows that
$$
M(n, 1) = \frac{\sqrt{2}}{8n} e^{\pi\sqrt{n}/2} \left( 1 + e^{-\pi\sqrt{n}} + \frac{6(1 - e^{-\pi\sqrt{n}})}{\pi \sqrt{n}} - 8e^{-\pi \sqrt{n}/2} \right).
$$
Hence, we see that
\begin{align}\label{eqn:order-of-growth-error}
\frac{8n}{e^{\pi \sqrt{n}}} |\Rt(n, 1)| &\leq \frac{8n}{e^{\pi \sqrt{n}}} M(n, 1) \notag \\
&= \sqrt{2} e^{-\pi\sqrt{n}/2} \left( 1 + e^{-\pi\sqrt{n}} + \frac{6(1 - e^{-\pi\sqrt{n}})}{\pi \sqrt{n}} - 8e^{-\pi \sqrt{n}/2} \right) \xrightarrow[n \to \infty]{} 0.
\end{align}

Therefore, combining (\ref{Order-Calculation}), (\ref{Order-Main-Term}) and (\ref{eqn:order-of-growth-error}), we conclude that 
\begin{align*}
\lim_{n \to \infty} \ov(n) \cdot \left( \frac{e^{\pi \sqrt{n}}}{8n} \right)^{-1} = 1,
\end{align*}
and this proves the proposition.
\end{proof}



\section{Arithmetic properties of \texorpdfstring{$\A_k(n)$}{Aktilde(n)} and its efficient evaluation}\label{AktildeSection}

In this section we study the function $\A_k(n)$ with the goal of using its properties to simplify and speed up our method to compute the values of the overpartition function $\ov(n)$. Since the series (\ref{Overpartition-Rademacher}) is a sum over odd values of $k$, we will only be interested in properties of $\A_k(n)$ for odd $k$.

More precisely, our first result, Theorem \ref{Multiplicativity}, will allow us to prove that if $k$ is an odd positive integer with prime factorization $k = p_1^{\alpha_1} \cdots p_j^{\alpha_j}$, then for a positive integer $n$, there exist positive integers $n_1, \dots, n_j$ such that

\begin{align}
\A_{k}(n) = \A_{p_{1}^{\alpha_1}}(n_1) \cdots \A_{p_{j}^{\alpha_j}}(n_j).
\end{align}
This decomposition then shifts the problem of computing the values $\A_k(n)$ to computing the values $\A_{q}(n)$ when $q$ is the power of an odd prime. This is achieved in Theorem~\ref{P-Power}.

\subsection{A multiplicativity relation satisfied by \texorpdfstring{$\A_k(n)$}{Aktilde(n)}}

In \cite[Theorem 1]{Leh38},  D. H. Lehmer proved that the function $A_k(n)$ that appears in the Hardy-Ramanujan-Rademacher series (\ref{HRR}) for the partition function $p(n)$ satisfies a certain multiplicativity relation. More precisely,  Lehmer proved that if $k_1$ and $k_2$ are positive integers with $\gcd(k_1,k_2) = \gcd(k_1 k_2,6) = 1$, then, given a positive integer $n$, we have that
$$
A_{k_1 k_2}(n) = 
A_{k_1}(n_1) A_{k_2}(n_2)
$$
if $n_1,n_2$ are positive integers satisfying the congruences
$$
24 n \equiv k_2^2(n_1 - 1 ) + 1 \pmod{k_1},
\qquad
24 n \equiv k_1^2(n_2 - 1 ) + 1 \pmod{k_2}.
$$

Interestingly, we found that the function $\A_k(n)$ defined in (\ref{Aktilde}) satisfies a very similar type of multiplicativity relation, which we state in the following theorem.


\begin{theorem}\label{Multiplicativity}
Let $k_1$ and $k_2$ be coprime positive integers. 
Given a positive integer $n$, let $n_1,n_2$ be positive integers satisfying
$$
k_2^2 n_1 \equiv n \pmod{k_1},
\qquad
k_1^2 n_2 \equiv n \pmod{k_2}.
$$
Then we have
$$
\A_{k_1 k_2}(n) = \A_{k_1}(n_1) \A_{k_2}(n_2).
$$
\end{theorem}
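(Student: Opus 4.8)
The plan is to mimic Lehmer's argument for $A_k(n)$, working directly with the definition \eqref{Aktilde} of $\A_k(n)$. The key point is that the summation is over $h$ coprime to $k = k_1 k_2$, and by the Chinese Remainder Theorem such $h$ are in bijection with pairs $(h_1, h_2)$, where $h_i$ runs over residues mod $k_i$ coprime to $k_i$. Writing $h \equiv h_1 k_2 \overline{k_2} + h_2 k_1 \overline{k_1} \pmod{k_1 k_2}$ (with $\overline{k_i}$ an inverse of $k_i$ mod the other modulus), one hopes that both the exponential factor $e^{-2\pi i n h/k}$ and the Dedekind-sum factor $\omega(h,k)^2/\omega(2h,k)$ split as a product of a term depending only on $(h_1,k_1)$ and a term depending only on $(h_2,k_2)$. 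The exponential part is the easy half: expanding $nh/k$ via CRT gives $nh/k \equiv n h_1 \overline{k_2}/k_1 + n h_2 \overline{k_1}/k_2 \pmod 1$, and choosing $n_1, n_2$ so that $k_2^2 n_1 \equiv n \pmod{k_1}$ and $k_1^2 n_2 \equiv n \pmod{k_2}$ converts $n \overline{k_2}$ into $k_2 n_1$ mod $k_1$ (since $\overline{k_2} \equiv k_2 \overline{k_2}^2 \equiv k_2 \cdot (k_2^{-1})^2$... one tracks this carefully), producing exactly $e^{-2\pi i n_1 h_1'/k_1} \cdot e^{-2\pi i n_2 h_2'/k_2}$ for suitably relabeled $h_i'$.

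The substantive step is the multiplicative behavior of the weight $\omega(h,k)^2/\omega(2h,k) = e^{\pi i (2 s(h,k) - s(2h,k))}$. Here I would invoke the classical results of Rademacher and Whiteman on Dedekind sums that the paper explicitly says it will use, in particular the behavior of $s(h,k)$ under factorization of $k$ into coprime parts (the relevant identity expresses $s(h_1 k_2 + h_2 k_1, k_1 k_2)$ or $s(h \overline{k_2} k_2 + \cdots)$ in terms of $s$ of the pieces, up to a correction term), together with the transformation $s(h,k) \mapsto s$ under multiplying $h$ by a unit. The goal is to show $2s(h, k_1 k_2) - s(2h, k_1 k_2) \equiv \big(2 s(h_1, k_1) - s(2 h_1', k_1)\big) + \big(2 s(h_2, k_2) - s(2 h_2'', k_2)\big) \pmod 2$, where the correction terms arising from the reciprocity-type formulas cancel between the $2s(h,k)$ piece and the $s(2h,k)$ piece — this cancellation is exactly why the "twisted" combination $\omega^2/\omega(2\cdot)$ behaves well, whereas $\omega$ alone would not. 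The parity of $k_1 k_2$ (odd, in our application, but the theorem is stated for general coprime $k_1,k_2$) must be watched when dividing by $2$ inside the argument $2h$.

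Once the summand factors as $f_1(h_1) f_2(h_2)$ with $h_i$ coprime to $k_i$, the double sum factors as $\big(\sum_{h_1} f_1(h_1)\big)\big(\sum_{h_2} f_2(h_2)\big)$, and after identifying each factor with $\A_{k_1}(n_1)$ and $\A_{k_2}(n_2)$ respectively — using that reindexing $h_i$ by a unit of $\Z/k_i\Z$ is a bijection that leaves the defining sum unchanged — we obtain $\A_{k_1 k_2}(n) = \A_{k_1}(n_1)\A_{k_2}(n_2)$. The main obstacle I anticipate is bookkeeping the Dedekind-sum transformation laws precisely: getting the correction terms in the generalized reciprocity formula to cancel modulo $2$ after the $2 s(h,k) - s(2h,k)$ combination, and correctly handling the inverses $\overline{k_1}, \overline{k_2}$ so that the arguments $n_1, n_2$ come out exactly as in the statement rather than off by a unit factor. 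I would isolate the Dedekind-sum computation as a separate lemma before assembling the proof.
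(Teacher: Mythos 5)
Your overall plan matches the paper's proof: you use the Chinese Remainder Theorem to split the summation index, you isolate the exponential factor (easy) from the Dedekind-sum weight (hard), and you correctly recognize that the multiplicative behavior of $\omega(h,k)^2/\omega(2h,k)$ should come from applying a Rademacher--Whiteman-type congruence to Dedekind sums modulo $2$, with the ``correction'' terms canceling when you combine the contributions from $s(h,k)$ and $s(2h,k)$. That last intuition is exactly right: the paper applies \cite[Theorem 20]{RW41} once with argument $h$ and once with argument $2h$, and the fractional correction terms $\tfrac{ab}{12c}$ drop out in the combination $2\cdot(\text{RW for }h) - (\text{RW for }2h)$.

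However, there is a gap you don't address, and it is the genuine technical obstacle. The Rademacher--Whiteman congruence
\[
\Bigl(s(ab,c)-\tfrac{ab}{12c}\Bigr) + \Bigl(s(bc,a)-\tfrac{bc}{12a}\Bigr) - \Bigl(s(b,ac)-\tfrac{b}{12ac}\Bigr) \equiv 0 \pmod 2
\]
holds only under the hypothesis $24 \mid abc$, which is not automatic for an arbitrary representative $h$ of a class in $(\Z/k_1k_2\Z)^\times$. The paper's proof deals with this by observing that, by Proposition~\ref{DedekindSumProperties}(\ref{item:s-hhp}), the summand depends only on $h \bmod k_1k_2$, so one may replace $h$ by any representative in its class; since $w := 24/\gcd(24,k_1k_2)$ is coprime to $k_1k_2$, each class contains a representative with $w \mid h$, which forces $24 \mid k_1 k_2 h$ and lets one apply the Rademacher--Whiteman theorem with $(a,b,c) = (k_1,h,k_2)$ and $(k_1,2h,k_2)$. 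Without this choice of representatives your invocation of ``reciprocity-type formulas'' for Dedekind sums does not go through, because the needed congruence is simply false in general. One further small point: $\A_k(n)$ involves $\omega(2h,k)$ and hence only makes sense for odd $k$, so the parity concern you raise about $k_1k_2$ is moot; both $k_1$ and $k_2$ must be odd for the statement to be well-posed.
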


\subsection{The prime power case}
 We carried out an analysis similar to Lehmer's but for the function $\A_q(n)$ when $q$ is a prime power, and arrived at compact formulas.

\begin{theorem}\label{P-Power}
Let $p \geq 3$ be a prime and let $q = p^{\alpha}$ with $\alpha  \geq 1$.
Let $n$ be a positive integer.
Then we have
\begin{align}
\label{eqn:P-power}
\A_q(n) = 
\begin{cases}
\sqrt{q}, & \text{if $p \mid n$ and $\alpha = 1$}, \\
0, & \text{if $p \mid n$ and $\alpha > 1$},\\
0, & \text{if $p\nmid n$ and $-n$ is not a quadratic residue modulo $q$, and}\\
2\sqrt{q} \cos{\left( \dfrac{4\theta\pi}{q} \right)},&
    \text{if $p\nmid n$ and $-n \equiv (4\theta)^2 \pmod{q}$ for some $\theta \in \Z$}.
\end{cases}
\end{align}
\end{theorem}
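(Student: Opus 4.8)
The plan is to rewrite the summand in \eqref{Aktilde} in closed form for odd $q$, recognize the resulting expression as a Salié sum modulo $q$, and then apply Salié's evaluation of such sums at prime power moduli. Since $\omega(h,q)=e^{\pi i s(h,q)}$, the summand equals $e^{\pi i(2s(h,q)-s(2h,q))}$, so the whole question reduces to computing $2s(h,q)-s(2h,q)$ modulo $2$ for $q$ odd. Using Dedekind reciprocity together with the classical formulas for $\omega(h,q)$ in terms of Jacobi symbols (the Rademacher--Whiteman tools already used for Theorem~\ref{Multiplicativity}), I would establish that, for $q$ an odd prime power and $\gcd(h,q)=1$,
\[
\frac{\omega(h,q)^2}{\omega(2h,q)}=\overline{\epsilon_q}\,\legendre{h}{q}\,e^{2\pi i\kappa h^{*}/q},
\]
where $h^{*}$ denotes the inverse of $h$ modulo $q$, $\kappa\equiv 16^{-1}\pmod q$, and $\epsilon_q$ equals $1$ or $i$ according as $q\equiv 1$ or $3\pmod 4$ (for $q=3,5,7$ this is confirmed directly from the Dedekind sums). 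Substituting this into \eqref{Aktilde} and reindexing by $h\mapsto h^{*}$ (note $\legendre{h^{*}}{q}=\legendre{h}{q}$), the product $e^{2\pi i\kappa h^{*}/q}e^{-2\pi i n h/q}$ becomes $e^{2\pi i(\kappa h-nh^{*})/q}$, so
\[
\A_q(n)=\overline{\epsilon_q}\sum_{\substack{0\le h<q\\(h,q)=1}}\legendre{h}{q}\,e^{2\pi i(\kappa h-nh^{*})/q}=\overline{\epsilon_q}\,T(\kappa,-n;q),
\]
a normalized Salié sum.

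For the generic case $p\nmid n$, Salié's formula says that $T(\kappa,-n;q)$ vanishes unless $\kappa(-n)$ is a quadratic residue modulo $q$, in which case it equals $\legendre{-n}{q}\,\epsilon_q\sqrt q\,\sum_{u^{2}\equiv\kappa(-n)}e^{4\pi i u/q}$. Since $\kappa$ is a square, solubility is equivalent to $-n$ being a quadratic residue modulo $q$, and a solution $u=\theta$ then satisfies $\theta^{2}\equiv -n\cdot16^{-1}$, i.e.\ $(4\theta)^{2}\equiv -n\pmod q$, so that the two solutions $\pm\theta$ contribute $e^{4\pi i\theta/q}+e^{-4\pi i\theta/q}=2\cos(4\theta\pi/q)$. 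Multiplying by $\overline{\epsilon_q}$, using $\overline{\epsilon_q}\,\epsilon_q=1$ and $\legendre{-n}{q}=1$ (which holds exactly when such a $\theta$ exists), yields $\A_q(n)=2\sqrt q\cos(4\theta\pi/q)$ in the residue case and $\A_q(n)=0$ otherwise.

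The cases $p\mid n$ are where $T$ degenerates. If $\alpha=1$ then $-n\equiv0\pmod p$, the $h^{*}$-term drops out, and $T(\kappa,-n;p)$ becomes the quadratic Gauss sum $\sum_h\legendre{h}{p}e^{2\pi i\kappa h/p}=\legendre{\kappa}{p}\,\epsilon_p\sqrt p=\epsilon_p\sqrt p$, so multiplying by $\overline{\epsilon_p}$ gives $\A_p(n)=\sqrt p$. If $\alpha>1$, I would write $h=h_0+p^{\alpha-1}t$ with $t$ running over $\Z/p\Z$: then $h^{*}\equiv h_0^{-1}-h_0^{-2}p^{\alpha-1}t\pmod q$, so along each such coset $\legendre{h}{q}$ is constant while the additive character varies as $e^{2\pi i t(\kappa+n h_0^{-2})/p}$, and since $p$ is odd and $p\mid n$ the coefficient is $\equiv\kappa\not\equiv0\pmod p$; hence every inner sum over $t$ vanishes and $\A_q(n)=0$.

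The main obstacle is the first step: the existence of a clean closed form for $\omega(h,q)^2/\omega(2h,q)$ follows from the reciprocity machinery, but determining its constant root-of-unity factor exactly --- and uniformly across all prime powers $q$, not merely primes --- is the delicate point, precisely because that factor must cancel the $\epsilon_q$ produced by Salié's formula so that the final values come out to exactly $\sqrt q$, $0$, and $2\sqrt q\cos(4\theta\pi/q)$.
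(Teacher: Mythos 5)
Your proposal is correct and follows essentially the same route as the paper: express the twisted summand $\omega(h,q)^2/\omega(2h,q)$ as $\overline{\epsilon_q}\,\legendre{h}{q}\,e^{2\pi i\cdot 16^{-1}h^{-1}/q}$, recognize $\A_q(n)$ as an explicit Salié sum, and invoke Salié's evaluation at odd prime-power modulus. The paper packages the ``delicate point'' you flag into Proposition~\ref{prop:AtSalie} by proving congruences for $g_n(h,k)=24k\,s(h,k)-12k\,s(2h,k)-24nh$ modulo $8$ and modulo $3k$ (using the Dedekind-sum properties in Proposition~\ref{DedekindSumProperties} rather than reciprocity per se), and then cites the Salié/Lehmer theorem (Theorem~\ref{Salie-sum-evaluation}) wholesale rather than rederiving the degenerate $p\mid n$ cases as you do.
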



\section{Proofs of Theorems \ref{Multiplicativity} and \ref{P-Power}}
\label{sect:proofs}

We start by setting up the notation and preliminary results that will be used in both proofs.
First, recall that 
\begin{align}
\label{Aktilde_bis}
\widetilde{A}_k(n) := \sum_{\substack{0 \leq h < k\\ (h, k) = 1}} \frac{\omega(h, k)^2}{\omega(2h, k)} e^{-2\pi i n h/k},
\end{align}
with $\omega(h, k) = e^{\pi i s(h, k)}$ and $s(h,k)$ is the Dedekind sum given by
\begin{align}
s(h, k) = \sum_{r = 1}^{k - 1} \frac{r}{k} \left( \frac{hr}{k} - \left \lfloor \frac{hr}{k} \right \rfloor - \frac{1}{2} \right).
\end{align}
In the following proposition we gather some known properties of the Dedekind sums that will be important in our discussion.

\begin{proposition}[Properties of Dedekind sums]\label{DedekindSumProperties}
Let $h, k$ be positive integers with $\gcd{(h, k)} = 1$.
Then the Dedekind sum $s(h, k)$ satisfies the following properties.
\begin{enumerate}
\item The denominator of $s(h, k)$ is a divisor of $2k \gcd{(3, k)}$. In particular $6k\,s(h, k)$ is always an integer.
\label{item:s-entero}
\item If $\theta := \gcd{(k, 3)}$, then
$
12hk\,s(h, k) \equiv h^2 + 1 \pmod{\theta k}.
$
\label{item:s-h2}
\item $12k\,s(h, k) \equiv 0 \pmod{3}$ if and only if $3 \nmid k$.
\label{item:s-mod3}
\item If $k$ is odd, then $\displaystyle{12k\,s(h, k) \equiv k + 1 - 2 \legendre{h}{k} \pmod{8}.}$
\label{item:s-mod8}
\item If $h \equiv h' \pmod{k}$, then $s(h, k) = s(h', k)$.
\label{item:s-hhp}
\end{enumerate}
\end{proposition}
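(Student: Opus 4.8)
The plan is to handle the five items in a convenient logical order, since most are classical and two of them follow formally from the reciprocity law for Dedekind sums together with item~\eqref{item:s-entero}. Recall that reciprocity asserts that for coprime positive integers $h,k$,
\begin{align*}
s(h, k) + s(k, h) = -\frac14 + \frac1{12}\left( \frac hk + \frac kh + \frac1{hk} \right),
\end{align*}
which after multiplying through by $12hk$ becomes $12hk\,s(h,k) + 12hk\,s(k,h) = h^2 + k^2 + 1 - 3hk$.

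First I would dispatch item~\eqref{item:s-hhp}, which is immediate: in the defining sum, $\tfrac{hr}{k} - \lfloor \tfrac{hr}{k}\rfloor$ is the fractional part of $hr/k$ and hence depends only on the residue of $h$ modulo $k$, and this residue also controls coprimality with $k$. For item~\eqref{item:s-entero}, that the denominator of $s(h,k)$ divides $2k\gcd(3,k)$ and in particular $6k\,s(h,k)\in\Z$, I would cite the standard references on Dedekind sums (for instance the monograph of Rademacher and Grosswald, or Apostol's book); it can also be recovered by induction through the Euclidean algorithm, using the cleared reciprocity identity and item~\eqref{item:s-hhp}. Granting \eqref{item:s-entero}, item~\eqref{item:s-h2} comes out of the cleared reciprocity identity: writing $\theta = \gcd(3,k)$, one checks that $3 \mid k$ forces $\gcd(h,3) = 1$, so in that case the denominator of $s(k,h)$ divides $2h$, whence $12hk\,s(k,h) \equiv 0 \pmod{\theta k}$, while modulo $\theta k$ both $k^2$ and $3hk$ vanish (for $3 \mid k$, write $k = 3m$, so $k^2 = 3k\,m$); the case $3 \nmid k$ reduces to the classical congruence modulo $k$. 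Item~\eqref{item:s-mod3} then follows from \eqref{item:s-entero} and \eqref{item:s-h2}: if $3 \nmid k$ then $2k\,s(h,k)\in\Z$, so $12k\,s(h,k)$ is a multiple of $6$; if $3 \mid k$ then $\gcd(h,3)=1$, so $h^2 + 1 \equiv 2 \pmod 3$, and reading \eqref{item:s-h2} modulo $3$ after factoring $12hk\,s(h,k) = h \cdot (12k\,s(h,k))$ forces $12k\,s(h,k) \not\equiv 0 \pmod 3$ since $h$ is invertible modulo $3$.

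The hard part will be item~\eqref{item:s-mod8}, the congruence $12k\,s(h,k) \equiv k + 1 - 2\legendre{h}{k} \pmod 8$ for odd $k$, which couples Dedekind sums to the Jacobi symbol and is not a formal consequence of reciprocity. I would prove it by induction on $k$ from the cleared reciprocity identity, using quadratic reciprocity for the Jacobi symbol and keeping careful track of powers of $2$, or else quote it from the literature on the transformation law of the Dedekind eta function at odd level, where precisely this identity occurs. With all five items established the proposition follows; the only genuinely delicate point is \eqref{item:s-mod8}, the remaining four being either one-line observations or formal consequences of reciprocity and \eqref{item:s-entero}.
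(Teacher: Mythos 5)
Your proposal is correct, but it is more self‑contained than what the paper does: the paper's proof of this proposition is pure citation (item (a) to Rademacher–Grosswald or Apostol, items (b) and (c) to the proof of Apostol's Theorem~3.8, item (d) to Rademacher–Grosswald~(42), item (e) to Apostol's Theorem~3.6(a)). You instead dispatch (e) directly, accept (a) and (d) by citation (which is also what the paper does), and then \emph{derive} (b) from the cleared reciprocity identity $12hk\,s(h,k) + 12hk\,s(k,h) = h^2 + k^2 + 1 - 3hk$ together with (a), and (c) from (a) and (b). That derivation is sound — in the $3\mid k$ case your observation that $\gcd(h,3)=1$ lets you bound the denominator of $s(k,h)$ by $2h$ so that $12hk\,s(k,h)=6k\cdot(2h\,s(k,h))\equiv 0\pmod{3k}$, while $k^2$ and $3hk$ both vanish mod $3k$; in the $3\nmid k$ case the same trick gives everything mod $k$; and for (c) the factoring $12hk\,s(h,k)=h\cdot 12k\,s(h,k)$ combined with $h^2+1\equiv 2\pmod 3$ correctly shows non‑vanishing mod~$3$ when $3\mid k$, while (a) gives divisibility by $6$ when $3\nmid k$. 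This is essentially the argument in Apostol's proof of his Theorem~3.8, which the paper references without reproducing, so the two approaches rest on the same underlying facts; yours buys a reader-visible logical chain at the cost of a few more lines, while the paper's buys brevity. Your remark that (d) is the genuinely delicate point and is best quoted from the literature on the $\eta$-transformation law matches the paper, which cites Rademacher–Grosswald~(42) for it.
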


\begin{proof}
For (\ref{item:s-entero}) see \cite[Chapter 3, Theorem 2]{RG72}  or \cite[Theorem 3.8]{Apo90}. For properties (\ref{item:s-h2}) and (\ref{item:s-mod3}) see \cite[Theorem 3.8]{Apo90} and its proof. For (\ref{item:s-mod8}) see \cite[(42)]{RG72}, and for (\ref{item:s-hhp}) see \cite[Theorem 3.6 (a)]{Apo90}.
\end{proof}



\subsection{Proof of Theorem \ref{Multiplicativity}}

We start by stating the following fundamental result, due to Radema\-cher and Whiteman (\cite[Theorem 20]{RW41}).

\begin{theorem}\label{Rademacher-Whiteman}
Let $a, b, c$ be pairwise coprime positive integers such that $24 \mid abc$. Then the Dedekind sums satisfy the congruence
\begin{align}
\left( s(ab, c) - \frac{ab}{12c} \right) + \left( s(bc, a) - \frac{bc}{12a} \right) - \left( s(b, ac) - \frac{b}{12ac} \right) \equiv 0 \pmod{2}.
\end{align}
\end{theorem}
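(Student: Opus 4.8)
\emph{Proof strategy.} The plan is to read the three quantities $s(ab,c)-\tfrac{ab}{12c}$, $s(bc,a)-\tfrac{bc}{12a}$ and $s(b,ac)-\tfrac{b}{12ac}$ as the phases of the multiplier system of the Dedekind eta function evaluated at suitable matrices in $\mathrm{SL}_2(\Z)$, and to extract the congruence from the fact that this multiplier system, although not a character, has a coboundary valued in $\{\pm1\}$. Recall that for $\gamma=\left(\begin{smallmatrix} a' & b' \\ c' & d'\end{smallmatrix}\right)\in\mathrm{SL}_2(\Z)$ with $c'>0$ one has $\eta(\gamma\tau)=\varepsilon(\gamma)\bigl(-i(c'\tau+d')\bigr)^{1/2}\eta(\tau)$ with $\varepsilon(\gamma)=\exp\!\bigl(\pi i\bigl(\tfrac{a'+d'}{12c'}-s(d',c')-\tfrac14\bigr)\bigr)$, and, equivalently, the Rademacher symbol $\Phi(\gamma):=\tfrac{a'+d'}{c'}-12\,s(d',c')$ is an \emph{integer} satisfying the near-additivity identity $\Phi(\gamma_1\gamma_2)=\Phi(\gamma_1)+\Phi(\gamma_2)-3\,\mathrm{sign}(c_1c_2c_{12})$, where $c_i$ is the lower-left entry of $\gamma_i$ and $c_{12}$ that of $\gamma_1\gamma_2$ (see \cite{RG72}). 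After multiplying the claimed congruence by $12$ it becomes a statement modulo $24$ about integer combinations of values of $\Phi$, so it suffices to produce matrices $\gamma_1,\gamma_2,\gamma_3$ with lower-left entries $c$, $a$, $ac$ and lower-right entries congruent to $ab$, $bc$, $b$ modulo $24c$, $24a$, $24ac$ respectively, together with a word in $\gamma_1^{\pm1},\gamma_2^{\pm1},\gamma_3^{\pm1}$ equal to $\pm T^{m}$ (with $T=\left(\begin{smallmatrix}1&1\\0&1\end{smallmatrix}\right)$ the standard translation and $24\mid m$).

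I would build the $\gamma_i$ directly by the Chinese Remainder Theorem applied to $a$, $c$ and a $24$-divisible multiple of $b$: since $a,b,c$ are pairwise coprime, the hypothesis $24\mid abc$ means the prime-power factors $8$ and $3$ are each concentrated in a single one of $a,b,c$, and this is precisely the flexibility needed to choose the auxiliary entries so that the relevant product of the $\gamma_i^{\pm1}$ is unipotent upper-triangular with $24\mid m$. On the one hand, iterating the near-additivity identity along that word produces an exact relation $\pm\Phi(\gamma_1)\pm\Phi(\gamma_2)\pm\Phi(\gamma_3)=(\text{an explicit integer assembled from the }\mathrm{sign}\text{-defects and from }m)$, and $24\mid m$ forces the right-hand side to be $\equiv0\pmod{24}$. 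On the other hand, reading off the upper-left entries of the same matrix products yields the rational identity that converts the $\tfrac{a'+d'}{c'}$ parts of the three $\Phi$-values back into $\tfrac{ab}{12c}$, $\tfrac{bc}{12a}$ and $\tfrac{b}{12ac}$, the discrepancy being a multiple of $24$ because of the congruences imposed on the entries. Combining the two and dividing by $12$ gives the claimed congruence.

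An essentially equivalent but modular-form-free route is to split the congruence modulo $24$ into its components modulo $8$ and modulo $3$ and treat each with Proposition~\ref{DedekindSumProperties}: part~(\ref{item:s-mod8}) handles the mod-$8$ part once one knows which of $a,b,c$ carries the factor $8$, while part~(\ref{item:s-mod3}) together with part~(\ref{item:s-h2}) handles the mod-$3$ part, after first using Dedekind reciprocity (see \cite{Apo90}) to replace $s(b,ac)$, whose modulus is composite, by $s(ac,b)$ plus an elementary rational correction. In either approach the real work is bookkeeping rather than conceptual, and the main obstacle is twofold: first, exhibiting the auxiliary matrices (or, in the elementary version, arranging the several auxiliary congruences) in the first place, and second, checking that all the rational correction terms — the $\mathrm{sign}$-defects, the translation exponent $m$, the gaps between $\tfrac{a'+d'}{c'}$ and the target numerators, and the fractional contributions coming from moduli divisible by $3$ — provably cancel modulo $24$, and that it is exactly the divisibility $24\mid abc$ that makes them cancel. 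I expect the matrix-word approach to be the cleaner place to carry this out, since there the only freedom to track is the choice of the $\gamma_i$, and the $\{\pm1\}$-valued coboundary of $\varepsilon$ packages all the parity information in a single term.
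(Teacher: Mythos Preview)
The paper does not give its own proof of this statement: Theorem~\ref{Rademacher-Whiteman} is simply quoted as a known result of Rademacher and Whiteman, with a citation to \cite[Theorem~20]{RW41}, and is then used as a black box in the proof of Theorem~\ref{Multiplicativity}. So there is no argument in the paper to compare your proposal against.

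That said, your strategy is sound and in fact close in spirit to the original Rademacher--Whiteman argument, which also proceeds via the transformation behavior of $\log\eta$ and the resulting integrality and near-additivity of what you call $\Phi$. Your alternative elementary route---splitting modulo $8$ and modulo $3$ and invoking the congruences of Proposition~\ref{DedekindSumProperties} together with reciprocity---would also work and is perhaps closer to how one would reprove the result from scratch using only the tools already assembled in the paper.

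However, what you have written is a strategy, not a proof. In the matrix approach you assert that one can choose $\gamma_1,\gamma_2,\gamma_3\in\mathrm{SL}_2(\Z)$ with prescribed bottom rows (modulo specified integers) so that a suitable word in them equals $\pm T^m$ with $24\mid m$, but you neither construct such matrices nor verify that the sign-defects and the translation exponent indeed cancel modulo $24$; you yourself flag this as ``the main obstacle.'' In the elementary approach you correctly note that reciprocity is needed to handle $s(b,ac)$, but again the case analysis according to which of $a,b,c$ carries the factors $8$ and $3$ is not carried out. Either route can be completed, but as it stands the proposal is an outline rather than a proof, and if you intend to include an actual proof rather than a citation you will need to fill in one of these computations explicitly.
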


With this setup we can now give the proof.

\begin{proof}[Proof of Theorem~\ref{Multiplicativity}]
First, if $n, k, h, h' \in \Z$ are integers with $k \geq 1$ and $\gcd{(h, k)} = 1$, then by Proposition \ref{DedekindSumProperties} (\ref{item:s-hhp}) we have that if $h \equiv h' \pmod{k}$, then
\begin{align*}
\frac{\omega(h, k)^2}{\omega(2h, k)} e^{-2\pi i n h/k} = \frac{\omega(h', k)^2}{\omega(2h', k)} e^{-2\pi i n h'/k}.
\end{align*}
Therefore, for any set of representatives for $(\Z/k\Z)^\times$ we can write
\begin{align*}
\A_k(n) &= \sum_{\substack{[h] \in (\Z/k\Z)^{\times}}} \frac{\omega(h, k)^2}{\omega(2h, k)} e^{-2\pi i n h/k} \\
&= \sum_{\substack{[h] \in (\Z/k\Z)^{\times}}} \exp{\left( \pi i \left( 2s(h, k) - s(2h, k) - \frac{2n h}{k} \right) \right)}.
\end{align*}

Now, let $k_1, k_2$ be integers coprime, odd, positive integers and $n_1, n_2 \in \Z$. 
Then since  $(\Z/k_1\Z)^{\times} = \{ [k_2 h_1] \suchthat [h_1] \in (\Z/k_1\Z)^{\times} \}$ and $(\Z/k_2\Z)^{\times} = \{ [k_1 h_2] \suchthat [h_2] \in (\Z/k_2\Z)^{\times} \}$
we can write
\begin{multline*}
\A_{k_1}(n_1) \A_{k_2}(n_2) = 
\sum_{\substack{([h_1], [h_2]) \in (\Z/k_1\Z)^{\times}} \times (\Z/k_2\Z)^{\times}} \exp \left( \pi i \left( 2s(k_2h_1, k_1) - s(2k_2 h_1, k_1) - \frac{2n_1 k_2 h_1}{k_1} \right. \right. \\
\left. \left. - 2s(k_1 h_2, k_2) + s(2k_1 h_2, k_2) + \frac{2n_2 k_1 h_2}{k_2} \right) \right).
\end{multline*}
By the Chinese Remainder Theorem we know that $(\Z/k_1 k_2 \Z)^{\times} \simeq  (\Z/k_1 \Z)^{\times} \times (\Z/k_2 \Z)^{\times}$, where the isomorphism is given by $h + k_1k_2 \Z \mapsto (h + k_1\Z, h + k_2\Z)$. Thus, using this we can rewrite the previous equality as
\begin{multline*}
\A_{k_1}(n_1) \A_{k_2}(n_2) = 
\sum_{[h] \in (\Z/k_1k_2\Z)^{\times}} \exp \left( \pi i \left( 2s(k_2h, k_1) - s(2k_2 h, k_1) - \frac{2n_1 k_2 h}{k_1} \right. \right. \\
\left. \left. - 2s(k_1 h, k_2) + s(2k_1 h, k_2) + \frac{2n_2 k_1 h}{k_2} \right) \right).
\end{multline*}

On the other hand, we have
\begin{align*}
\A_{k_1 k_2}(n) = \sum_{\substack{[h] \in (\Z/k_1k_2\Z)^{\times}}} \exp{\left( \pi i \left( 2s(h, k_1 k_2) - s(2h, k_1 k_2) - \frac{2n h}{k_1 k_2} \right) \right)}.
\end{align*}
By the periodicity of $\exp{(z)}$, in order to prove that $\A_{k_1 k_2}(n) = \A_{k_1}(n_1) \A_{k_2}(n_2)$ it suffices to prove that
for every $[h] \in (\Z/k_1k_2\Z)^{\times}$
\begin{multline}\label{Wanted-congruence}
2s(k_2h, k_1) - s(2k_2 h, k_1) - \frac{2n_1 k_2 h}{k_1} - 2s(k_1 h, k_2) + s(2k_1 h, k_2) + \frac{2n_2 k_1 h}{k_2} \\ 
- 2s(h, k_1 k_2) + s(2h, k_1 k_2) + \frac{2n h}{k_1 k_2} \equiv 0 \pmod{2}
\end{multline}

Now we want to apply Theorem \ref{Rademacher-Whiteman}. In order to do this we
let $w \in \Z$ be given by 
$$
w := \frac{24}{\gcd{(24, k_1 k_2)}},
$$ 
Since $\gcd{(w, k_1k_2)} = 1$,
we can assume without loss of generality that in \eqref{Wanted-congruence} we have that $w \mid h$.
Under this assumption, since $\gcd{(k_1, k_2)} = 1$ we have that $24 \mid k_1 k_2 h$.
Thus we can apply Theorem \ref{Rademacher-Whiteman} twice 
and we get
\begin{align}\label{Conguence-Rademacher-Whiteman-1}
\left( s(k_2h, k_1) - \frac{k_2 h}{12k_1} \right) + \left( s(k_1h, k_2) - \frac{k_1 h}{12k_2} \right) - \left( s(h, k_1k_2) - \frac{h}{12k_1 k_2} \right) & \equiv 0 \pmod{2}\\
\label{Conguence-Rademacher-Whiteman-2}
\left( s(2k_2h, k_1) - \frac{2k_2 h}{12k_1} \right) + \left( s(2k_1h, k_2) - \frac{2k_1 h}{12k_2} \right) - \left( s(2h, k_1k_2) - \frac{2h}{12k_1 k_2} \right) & \equiv 0 \pmod{2}.
\end{align}
Adding together the congruences (\ref{Wanted-congruence}) and (\ref{Conguence-Rademacher-Whiteman-2}) and subtracting twice the congruence (\ref{Conguence-Rademacher-Whiteman-1}) from that, we get that \eqref{Wanted-congruence} is equivalent to
\begin{align*}
    2\left(\frac{nh}{k_1k_2}-\frac{n_1hk_2}{k_1}-\frac{n_2hk_1}{k_2}\right) \equiv 0 \pmod{2},
\end{align*}
which, since $\gcd(h,k_1 k_1) = 1$, is in turn equivalent to
\begin{align*}
n - n_1k_2^2 - n_2k_1^2 \equiv 0 \pmod{k_1k_2}.
\end{align*}
By the Chinese Remainder Theorem, this equivalent to the system of congruences
\begin{align*}
n &\equiv n_2k_1^2 \pmod{k_2},\\
n&\equiv n_1k_2^2 \pmod{k_1},
\end{align*}
which completes the proof.
\end{proof}


\subsection{Proof of Theorem \ref{P-Power}}

The main idea of the proof will be to express $\widetilde{A}_{k}(n)$ as a Salié sum, and then, to apply a theorem of Salié \cite{Sal32} that evaluates such sums explicitly. 

\medskip

We start by introducing some notation.
For $h, k \in \Z$ with $k \geq 1$ odd, $0 \leq h < k$ and $\gcd(h, k) = 1$ we define
\begin{align}\label{gn}
g_n(h, k) := 24k\,s(h, k) - 12k\,s(2h, k) -24nh,
\end{align}
so that \eqref{Aktilde_bis} can be written in terms of $g_n$ as follows:
\begin{align}\label{Ak-gn}
\A_k(n) = \sum_{\substack{0 \leq h < k\\ (h, k) = 1}} \exp{\left( \frac{\pi i}{12k} g_n(h, k) \right)}.
\end{align}

Note that by part (\ref{item:s-entero}) of Proposition \ref{DedekindSumProperties} we have that $g_n(h, k)$ is 
an even integer. 
Moreover, in the next lemmas we will use the congruences satisfied by the Dedekind sums $s(h, k)$ listed in Proposition \ref{DedekindSumProperties} in order to prove certain congruences satisfied by $g_n(h, k)$.

\begin{lemma}
The integer $g_n(h, k)$ satisfies that
\begin{equation}
\label{eqn:gn_mod8}
g_n(h, k) \equiv 
-6k\legendre{h}{k} + 3k\left(1+\legendre{-1}k \right)
\pmod{8}.
\end{equation}
\end{lemma}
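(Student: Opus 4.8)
The plan is to compute $g_n(h,k) = 24k\,s(h,k) - 12k\,s(2h,k) - 24nh$ modulo $8$ term by term, using the congruences for Dedekind sums listed in Proposition~\ref{DedekindSumProperties}. Since $k$ is odd, part~(\ref{item:s-mod8}) gives $12k\,s(h,k) \equiv k+1 - 2\legendre{h}{k} \pmod 8$, and hence $24k\,s(h,k) = 2\cdot 12k\,s(h,k) \equiv 2k + 2 - 4\legendre{h}{k} \pmod{16}$, so in particular $24k\,s(h,k) \equiv 2k+2-4\legendre{h}{k}\pmod 8$. The subtle point I would need to address first is the term $12k\,s(2h,k)$: the quantity $s(2h,k)$ only makes sense as it stands when $\gcd(2h,k)=1$, which holds precisely because $k$ is odd and $\gcd(h,k)=1$. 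So part~(\ref{item:s-mod8}) applies with $h$ replaced by $2h$, giving $12k\,s(2h,k) \equiv k+1-2\legendre{2h}{k}\pmod 8$.

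Next I would assemble the pieces. Writing $g_n(h,k) \equiv \bigl(2k+2-4\legendre{h}{k}\bigr) - \bigl(k+1-2\legendre{2h}{k}\bigr) - 24nh \pmod 8$, the constant and linear-in-$k$ parts combine to $k+1$, and we are left with $-4\legendre{h}{k} + 2\legendre{2h}{k} - 24nh$. Using multiplicativity of the Jacobi symbol, $\legendre{2h}{k} = \legendre 2k \legendre hk$, and the supplementary law $\legendre 2k \equiv k \pmod{\text{sign conventions}}$—more precisely $\legendre 2k = (-1)^{(k^2-1)/8}$, which I would convert into the statement that $2\legendre 2k\legendre hk \equiv 2(-1)^{(k^2-1)/8}\legendre hk\pmod 8$ and then analyze according to $k \bmod 8$. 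The term $-24nh \equiv 0 \pmod 8$ since $24 = 8\cdot 3$, so $n$ and $h$ drop out entirely mod $8$, which is the reason the right-hand side of \eqref{eqn:gn_mod8} is independent of $n$ and $h$ except through the Legendre symbol $\legendre hk$.

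The main obstacle—really the only place where care is needed—is the bookkeeping that collapses $k+1 - 4\legendre hk + 2\legendre 2k\legendre hk$ into the claimed form $-6k\legendre hk + 3k\bigl(1+\legendre{-1}k\bigr) \pmod 8$. I would handle this by a short case analysis on $k \bmod 8$ (the four odd residues $1,3,5,7$), in each case evaluating $\legendre 2k = (-1)^{(k^2-1)/8}$ and $\legendre{-1}k = (-1)^{(k-1)/2}$, and checking that $k + 1 - 4\legendre hk + 2\legendre 2k\legendre hk$ and $-6k\legendre hk + 3k + 3k\legendre{-1}k$ agree modulo $8$ for both possible values $\pm 1$ of $\legendre hk$. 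For instance, when $k\equiv 1\pmod 8$ one has $\legendre 2k = 1$, $\legendre{-1}k=1$, the left side is $2 - 2\legendre hk$ and the right side is $-6\legendre hk + 6 \equiv 2 - 2\legendre hk\pmod 8$ (using $-6\equiv 2$ and $6\equiv -2$, then $-2\legendre hk \equiv 2\cdot 3\legendre hk$ is not needed—direct substitution of $\legendre hk = \pm1$ gives $\{0,4\}$ on both sides). Since all eight resulting checks are finite arithmetic mod $8$, the verification is routine once the reduction is set up; I would present the reduction carefully and then state that the case check is immediate, possibly displaying one representative case.

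Finally, I would note for completeness that $g_n(h,k)$ is an even integer—this is recorded just before the lemma via part~(\ref{item:s-entero}) of Proposition~\ref{DedekindSumProperties}, since $6k\,s(h,k)\in\Z$ forces $24k\,s(h,k)$ and $12k\,s(2h,k)$ to be integers—so the congruence \eqref{eqn:gn_mod8} makes sense as a statement about integers, and indeed both sides are even (the right side is $-6k\legendre hk + 3k(1+\legendre{-1}k)$, which is even because $1+\legendre{-1}k\in\{0,2\}$ and $6k$ is even).
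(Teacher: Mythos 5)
Your proposal is correct and follows essentially the same route as the paper: apply Proposition~\ref{DedekindSumProperties}~(\ref{item:s-mod8}) to both $s(h,k)$ and $s(2h,k)$, drop $-24nh$ modulo $8$, use multiplicativity of the Jacobi symbol to reduce to $k+1 - \legendre{h}{k}\left(4 - 2\legendre{2}{k}\right)$, and then verify the claimed identity by a finite check over the odd residues of $k$ modulo $8$ and $\legendre{h}{k}\in\{\pm1\}$. The extra remarks you add (that $\gcd(2h,k)=1$ because $k$ is odd, and that both sides of \eqref{eqn:gn_mod8} are even) are sound but not needed beyond what the paper already records.
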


\begin{proof}
    Using the congruence from Proposition \ref{DedekindSumProperties} (\ref{item:s-mod8}) in the definition of $g_n(h, k)$ we have
\begin{align*}
g_n(h, k) &\equiv 24k\,s(h, k) - 12k\,s(2h, k) -24nh \pmod{8}\\
&\equiv 2 \left( k + 1 - 2\legendre{h}{k} \right) - \left( k + 1 - 2\legendre{2h}{k} \right) \pmod{8}\\
&\equiv 
k+1 - \legendre{h}{k}\left(4-2\legendre{2}{k}\right) \pmod{8}.
\end{align*}
Thus, it suffices to prove that for $\delta \in \{\pm1\}$ we have that
\begin{equation*}
    -6k\delta + 3k\left(1+\legendre{-1}k \right)
    \equiv
k+1 - \delta\left(4-2\legendre{2}{k}\right) \pmod{8}.
\end{equation*}
The latter equation can be easily verified for every odd $k$ modulo $8$, hence the result follows.
\end{proof}

For the remainder of this section we fix $n$ and $k$ as above, and for each integer $h$ with $0 \leq h < k$ with $\gcd(h,k) = 1$ we denote
$$t_n(h) = -nh + \frac1{16h} \pmod{k}.
$$

\begin{lemma}
The integer $g_n(h, k)$ satisfies that
\begin{equation}
\label{eqn:gn_mod3k}
g_n(h, k) \equiv 24 t_n(h) \pmod{3k}.
\end{equation}
\end{lemma}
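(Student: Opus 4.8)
Write $\theta=\gcd(k,3)$, so that $\theta k=k$ when $3\nmid k$ and $\theta k=3k$ when $3\mid k$. The plan is to prove \eqref{eqn:gn_mod3k} first as a congruence modulo $\theta k$, and then, only in the case $3\nmid k$, also modulo $3$. Since $\operatorname{lcm}(\theta k,3)=3k$ in both cases, these two pieces assemble into the statement: if $3\mid k$ the modulo-$\theta k$ congruence \emph{is} the claim, and if $3\nmid k$ the Chinese Remainder Theorem glues the modulo-$k$ and modulo-$3$ parts together.

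For the congruence modulo $\theta k$ I would use Proposition~\ref{DedekindSumProperties}\,(\ref{item:s-h2}), which controls $12hk\,s(h,k)$ modulo $\theta k$. Because the two Dedekind-sum terms in \eqref{gn} carry no factor of $h$, I would first multiply through by $2h$, getting
\begin{align*}
2h\,g_n(h,k)=48\,hk\,s(h,k)-24\,hk\,s(2h,k)-48nh^2.
\end{align*}
Applying part~(\ref{item:s-h2}) to $h$ and to $2h$ (legitimate since $\gcd(2h,k)=1$ as $k$ is odd) gives $12\,hk\,s(h,k)\equiv h^2+1$ and $24\,hk\,s(2h,k)\equiv 4h^2+1$ modulo $\theta k$; substituting yields $2h\,g_n(h,k)\equiv 4(h^2+1)-(4h^2+1)-48nh^2=3-48nh^2\pmod{\theta k}$. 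On the other side, the definition $t_n(h)\equiv -nh+(16h)^{-1}\pmod k$ gives $16h\,t_n(h)\equiv 1-16nh^2\pmod k$, hence $48h\,t_n(h)\equiv 3-48nh^2\pmod{3k}$ and a fortiori modulo $\theta k$. Comparing the two expressions, $2h\,g_n(h,k)\equiv 2h\cdot 24\,t_n(h)\pmod{\theta k}$, and I would finish this part by cancelling the factor $2h$, which is a unit modulo $\theta k$: indeed $k$ is odd, and if $3\mid k$ then $\gcd(h,k)=1$ forces $3\nmid h$.

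It remains to handle the modulus $3$ when $3\nmid k$. Here Proposition~\ref{DedekindSumProperties}\,(\ref{item:s-mod3}) gives $12k\,s(h,k)\equiv 12k\,s(2h,k)\equiv 0\pmod 3$, so directly from \eqref{gn} one gets $g_n(h,k)\equiv -24nh\equiv 0\pmod 3$, while trivially $24\,t_n(h)\equiv 0\pmod 3$; thus both sides vanish modulo $3$, and combining with the modulo-$k$ congruence proves \eqref{eqn:gn_mod3k}.

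The only genuinely delicate point is the modulus bookkeeping. Part~(\ref{item:s-h2}) supplies information only modulo $\theta k$, which is strictly weaker than modulo $3k$ exactly when $3\nmid k$, so the missing modulo-$3$ information must be drawn separately from part~(\ref{item:s-mod3}); and one must check that $2h$ is a unit modulo $\theta k$ before cancelling it. Everything else is a short manipulation relying on $k$ being odd, so that $2$ (hence $2h$) is invertible modulo $k$.
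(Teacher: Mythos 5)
Your proof is correct and follows essentially the same route as the paper's: both reduce to the congruence from Proposition~\ref{DedekindSumProperties}\,(\ref{item:s-h2}) to handle the modulus $\theta k$, and both invoke part~(\ref{item:s-mod3}) to supply the missing factor of $3$ when $3\nmid k$. The only cosmetic difference is bookkeeping: you clear denominators by multiplying through by $2h$ and cancel at the end, whereas the paper first divides part~(\ref{item:s-h2}) by $h$ to obtain $12k\,s(h,k)\equiv h+h^{-1}\pmod{\theta k}$ and then computes $g_n(h,k)\equiv 3(2h)^{-1}-24nh\pmod{\theta k}$ directly.
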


\begin{proof}
    Let $\theta := \gcd(k,3)$.
    By Proposition \ref{DedekindSumProperties} (\ref{item:s-h2}) we have that 
\begin{align}\label{JeanCarlos-Congruence}
12k\,s(h, k) \equiv h + h^{-1} \pmod{\theta k}.
\end{align}
Using (\ref{JeanCarlos-Congruence}) both for $h$ and $2h$ we get that
\begin{align*}
g_n(h, k) &= 24k\,s(h, k) - 12k\,s(2h, k) -24nh\\
&\equiv 2(h + h^{-1}) - (2h + (2h)^{-1}) - 24nh \pmod{\theta k}\\
&\equiv h^{-1}(2 - 2^{-1}) - 24nh \pmod{\theta k}\\
&\equiv 3(2h)^{-1} - 24nh \pmod{\theta k}\\
&\equiv 24t_n(h) \pmod{\theta k}.
\end{align*}
This proves the result in the case $\theta = 3$.

When $\theta = 1$, the above congruence shows that $g_n(h,k) \equiv 24 t_n(h) \pmod k$.
Then it suffices to prove that $g_n(h,k) \equiv 0 \pmod 3$.
By definition we have that
\begin{align*}
g_n(h, k) &\equiv 24k\,s(h, k) - 12k\,s(2h, k) \pmod{3}.
\end{align*}
Since $\theta = 1$, by Proposition \ref{DedekindSumProperties} (\ref{item:s-mod3}) we have that
$12k\,s(h, k) \equiv 12k\,s(2h,k) \equiv 0 \pmod{3}$.
Hence the result follows.
\end{proof}





Let $k$ be, as above, a positive, odd integer, and let $a$ be an integer modulo $k$.
The corresponding \emph{Salié sum} is defined by
\begin{align}\label{Salie-sum-definition}
S(a, k) := \sum_{\substack{0 \leq h < k\\ (h, k) = 1}} \dlegendre{h}{k} e^{2 \pi i (ah + h^{-1})/k}.   
\end{align}
In the following proposition we show that 
$\widetilde{A}_k(n)$ can be written as a Salié sum.

\begin{proposition}
\label{prop:AtSalie}
Let $\epsk$ be defined by
\begin{equation*}
    \epsilon_k := 
    (-i)^{((k-1)/2)^2}
    = 
\begin{cases}
1, &\text{if $k \equiv 1 \pmod{4}$},\\
-i, &\text{if $k \equiv -1 \pmod{4}$}.
\end{cases}
\end{equation*}
Let $16^{-1}$ be an inverse of $16$ modulo $k$, and let  $a := -16^{-1}n$.
Then
$$
\widetilde{A}_k(n) = \epsk
S(a, k).
$$
\end{proposition}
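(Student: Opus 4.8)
The plan is to evaluate each summand $\exp\!\left(\tfrac{\pi i}{12k}\,g_n(h,k)\right)$ of \eqref{Ak-gn} in closed form by feeding the two congruences \eqref{eqn:gn_mod8} and \eqref{eqn:gn_mod3k} into the Chinese Remainder Theorem for the modulus $24k = 8\cdot 3k$. This splitting is available precisely because $k$ is odd, so $\gcd(8,3k)=1$, and because $g_n(h,k)$ is an integer. Concretely, I would fix integers $\alpha,\beta$ with $8\alpha+3k\beta=1$ and, using $\tfrac{\pi i}{12k}=\tfrac{2\pi i}{24k}$, write
\[
\exp\!\left(\tfrac{\pi i}{12k}\,g_n(h,k)\right)=\exp\!\left(\tfrac{2\pi i\,\alpha\,g_n(h,k)}{3k}\right)\cdot\exp\!\left(\tfrac{\pi i\,\beta\,g_n(h,k)}{4}\right),
\]
where the first factor depends only on $g_n(h,k)\bmod 3k$ and the second only on $g_n(h,k)\bmod 8$.

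For the first factor, \eqref{eqn:gn_mod3k} and the congruence $8\alpha\equiv 1\pmod{3k}$ (hence also modulo $k$) give $\exp\!\left(\tfrac{2\pi i\,\alpha\,g_n(h,k)}{3k}\right)=\exp\!\left(\tfrac{2\pi i\,t_n(h)}{k}\right)=\exp\!\left(\tfrac{2\pi i(-nh+16^{-1}h^{-1})}{k}\right)$. For the second factor, \eqref{eqn:gn_mod8} together with $3k\beta\equiv1\pmod 8$ gives $\beta\,g_n(h,k)\equiv -2\legendre{h}{k}+1+\legendre{-1}{k}\pmod 8$, whence $\exp\!\left(\tfrac{\pi i\,\beta\,g_n(h,k)}{4}\right)=\gamma_k\legendre{h}{k}$ with $\gamma_k:=-i\exp\!\left(\tfrac{\pi i}{4}\bigl(1+\legendre{-1}{k}\bigr)\right)$ independent of $h$; a check of the cases $k\equiv1$ and $k\equiv-1\pmod4$ shows $\gamma_k=\epsk$. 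Substituting back into \eqref{Ak-gn} then yields
\[
\A_k(n)=\epsk\sum_{\substack{0\le h<k\\(h,k)=1}}\legendre{h}{k}\,e^{2\pi i(-nh+16^{-1}h^{-1})/k}.
\]

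To finish, I would perform the substitution $h\mapsto 16^{-1}h$, which permutes $(\Z/k\Z)^\times$ since $k$ is odd. It fixes $\legendre{h}{k}$ because $16=4^2$ forces $\legendre{16^{-1}}{k}=1$, it carries $-nh$ to $ah$ with $a=-16^{-1}n$, and it carries $16^{-1}h^{-1}$ to $h^{-1}$; the displayed sum therefore becomes exactly $S(a,k)$, which proves the proposition. The only genuinely non-mechanical step is this last change of variable, which is what recognizes the twisted exponential sum $\sum_h\legendre{h}{k}e^{2\pi i(-nh+16^{-1}h^{-1})/k}$ as a standard Salié sum; the CRT bookkeeping and the evaluation of the unimodular constant $\gamma_k=\epsk$ are routine and constitute the only (minor) obstacle.
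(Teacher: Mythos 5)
Your proof is correct and follows essentially the same route as the paper: both use the two congruences for $g_n(h,k)$ modulo $8$ and modulo $3k$ together with the Chinese Remainder Theorem to factor $\exp\!\left(\tfrac{\pi i}{12k}g_n(h,k)\right)$ as $\epsk\legendre{h}{k}e^{2\pi i t_n(h)/k}$, and then both identify the resulting twisted exponential sum as $S(a,k)$ via the change of variables $s=16h$ (equivalently $h\mapsto 16^{-1}h$). The paper simply states the combined congruence modulo $24k$ directly rather than introducing Bezout coefficients $\alpha,\beta$, but the computation is the same in substance.
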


\begin{proof}
    Let $0 \leq h < k$ with $\gcd(h, k) = 1$.
    The congruences in \eqref{eqn:gn_mod8} and \eqref{eqn:gn_mod3k} imply that
    \begin{equation*}
        g_n(h,k) \equiv
        24 t_n(h)
        - 6k\legendre{h}{k}
        + 3k\left(1+\legendre{-1}{k}\right)
        \pmod{24k}.
    \end{equation*}
    From this we get that
    \begin{align*}
    \exp{\left(\frac{\pi i}{12k}g_{n}(h,k)\right)}
    &=
    \exp{\left(\frac{\pi i(1+\legendre{-1}{k})}{4}\right)}
    \exp{\left(-\frac{\pi i\legendre{h}{k}}{2}\right)}
    \exp{\left(\frac{2\pi it_n(h)}{k}\right)}
\end{align*}
Then using that
\begin{align*}
    \epsk = -i \exp{\left(\frac{\pi i(1+\legendre{-1}{k})}{4}\right)}
    \qquad \text{and} \qquad
    \legendre{h}{k} = i\exp{\left(-\frac{\pi i\legendre{h}{k}}{2}\right)}
\end{align*}
we get from \eqref{Ak-gn} that
$$
\widetilde{A}_k(n) =
\epsk
\sum_{\substack{0 \leq h < k\\ (h, k) = 1}} \dlegendre{h}{k} e^{2\pi i t_n(h) / k},
$$
Hence the result follows by considering the change of variables $s = 16 h$, so that $t_n(h) \equiv as + s^{-1} \pmod k$ and $\legendre{s}{k} = \legendre{h}{k}$.
\end{proof}

We now state the theorem of Salié that we will use for computing Salié sums in the case of a prime power.
We note that here we cite the version of Salié's results given by Lehmer \cite[Lemma* 2, p. 283]{Leh38}, but, as Lehmer indicates, the results of that lemma can be gathered from equations (32), (37), (54) and (57) of \cite{Sal32}.

\begin{theorem}[Salié]\label{Salie-sum-evaluation}

Let $a$ be an integer, and let $q$ be a power of an odd prime.
Then we have
\begin{enumerate}
\item $S(a, q) = \epsq^{-1} \sqrt{q}$,\quad if $p | a$ and $\alpha = 1$.
\item $S(a, q) = 0$,\quad if $p | a$ and $\alpha > 1$.
\item $S(a, q) = 0$,\quad if $\gcd(a, q) = 1$ and $a$ is not a quadratic residue modulo $q$.
\item $S(a, q) = 2 \epsq^{-1} \sqrt{q} \cos{\left( \dfrac{4 \pi \theta}{q} \right)}$,\quad if $\gcd(a, q) = 1$ and $\theta \in \Z$ satisfies $\theta^2 \equiv a \pmod{q}$.

\end{enumerate}

\begin{proof}[Proof of Theorem~\ref{P-Power}]
    The theorem follows immediately from Proposition~\ref{prop:AtSalie} and Theorem~\ref{Salie-sum-evaluation}.
\end{proof}

\end{theorem}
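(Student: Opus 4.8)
The statement is the classical evaluation of a Sali\'e sum of prime-power modulus, in the version due to Lehmer, so the plan is to reconstruct the standard argument, which divides according to whether or not $p$ divides $a$. First I would dispose of the case $p\mid a$. If $\alpha=1$ the linear term $e^{2\pi i a h/p}$ is trivial, so $S(a,p)=\sum_{h}\legendre{h}{p}\,e^{2\pi i h^{-1}/p}$; since $h\mapsto h^{-1}$ is a bijection of $(\Z/p)^{\times}$ that fixes the Legendre symbol, this equals the quadratic Gauss sum $\sum_{h}\legendre{h}{p}\,e^{2\pi i h/p}$, whose classical value ($\sqrt{p}$ if $p\equiv1\bmod4$ and $i\sqrt{p}$ if $p\equiv-1\bmod4$) is precisely $\epsq^{-1}\sqrt{q}$; this is assertion (a). If $\alpha>1$ I would write each unit $h$ modulo $q$ as $h=h_{0}+p^{\alpha-1}t$, with $h_{0}$ running over the units modulo $p^{\alpha-1}$ and $t$ over a complete residue system modulo $p$; since $2(\alpha-1)\ge\alpha$, both $\legendre{h}{q}=\legendre{h_{0}}{p}^{\alpha}$ and $ah\equiv ah_{0}\pmod{q}$ are independent of $t$, while $h^{-1}\equiv h_{0}^{-1}-p^{\alpha-1}t\,h_{0}^{-2}\pmod{q}$, so the inner sum $\sum_{t}e^{-2\pi i t h_{0}^{-2}/p}$ vanishes because $h_{0}^{-2}$ is a unit; this is assertion (b).

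For the case $\gcd(a,q)=1$ the plan is to run the $p$-adic analogue of the stationary phase method on the phase $\phi(h)=ah+h^{-1}$, whose formal derivative $a-h^{-2}$ vanishes precisely at the solutions of the critical-point congruence $h^{2}\equiv a^{-1}$. If $a$ (equivalently $a^{-1}$) is not a quadratic residue modulo $q$ — for a unit this is the same as not being a residue modulo $p$ — there are no critical points, and the stationary-phase manipulation forces $S(a,q)=0$; this is assertion (c). If $a\equiv\theta^{2}$ with $\theta$ a unit, there are exactly two critical points $h\equiv\pm\theta^{-1}\pmod{q}$, with critical values $\phi(\pm\theta^{-1})=\pm(\theta^{2}\theta^{-1}+\theta)=\pm2\theta$; when $\alpha=1$ this is carried out directly via the Gauss-sum identity for $\legendre{h}{p}$, and when $\alpha\ge2$ via Hensel lifting (writing $h=h_{1}(1+p^{\lceil\alpha/2\rceil}u)$, so that the part of $\phi(h)$ linear in $u$ again forces $h_{1}^{2}\equiv a^{-1}$ and its two roots lift uniquely to $\pm\theta^{-1}$ modulo $q$, the residual sum over $u$ being a Gaussian sum). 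In either case each critical point contributes $\epsq^{-1}\sqrt{q}\cdot e^{\pm4\pi i\theta/q}$, and adding the two gives $\epsq^{-1}\sqrt{q}\,\bigl(e^{4\pi i\theta/q}+e^{-4\pi i\theta/q}\bigr)=2\epsq^{-1}\sqrt{q}\cos(4\pi\theta/q)$; this is assertion (d).

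The main obstacle is not this overall scheme but the bookkeeping that makes it exact. After completing the square one must evaluate the quadratic Gauss sums that appear with the correct power of $\sqrt{q}$ and the correct fourth root of unity, and track how they interact with the Legendre symbol $\legendre{h}{q}=\legendre{h}{p}^{\alpha}$ — which is trivial when $\alpha$ is even and the full symbol when $\alpha$ is odd — and with the split into $\lceil\alpha/2\rceil$ and $\lfloor\alpha/2\rfloor$ in the Hensel step; the value of the Legendre symbol at the two critical points, in particular, must be balanced against the opposite signs of $\phi''(h)=2h^{-3}$ there in order to land on the same normalization $\epsq^{-1}\sqrt{q}$ at both. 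Reconciling all of this is exactly Sali\'e's original calculation, which is what \cite[Lemma* 2]{Leh38} assembles from equations (32), (37), (54) and (57) of \cite{Sal32}, so in practice I would simply cite it; the sketch above records how one reconstructs it.
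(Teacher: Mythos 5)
The paper gives no proof of this theorem: it is stated as a known classical result, with the text just before it explaining that the statement is Lehmer's assemblage (\cite[Lemma* 2, p.~283]{Leh38}) of Salié's equations (32), (37), (54) and (57) in \cite{Sal32}, and the only proof attached is the one-line deduction of Theorem~\ref{P-Power} from it together with Proposition~\ref{prop:AtSalie}. Your proposal arrives at the same place --- citing Lehmer/Salié --- and the accompanying sketch of the standard argument (Gauss sum reduction when $p\mid a$, vanishing of the inner $t$-sum when $\alpha>1$, and the stationary-phase/Hensel-lift analysis of the phase $ah+h^{-1}$ with critical points $\pm\theta^{-1}$ when $\gcd(a,q)=1$) is correct in outline, so the two approaches are essentially identical.
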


\section{Algorithms}
\label{sect:algorithms}

\SetKwComment{Comment}{/* }{ */}
\SetKw{KwReturn}{return}
\SetKwInput{KwData}{Input}
\SetKwInput{KwResult}{Output}

We first recall from formula \eqref{R-Definition1} and Theorem \ref{thm:explicit-error-bound} that $\ov(n)$ is given by
\begin{align}
\label{Engel2}
\ov(n) = \frac{1}{4n} \sum_{\substack{k = 1\\ 2 \nmid k}}^N \frac{1}{\sqrt{k}} \,\A_k(n) \,U\left( \frac{\ct(n)}{k} \right) + \Rt(n, N),
\end{align}
where the error term $\Rt(n, N)$ satisfies the bound
\begin{align}
\label{eqn:error_bound_bis}
    \Rt(n,N) \leq \frac{1}{4\pi} \left( \frac{N+1}{n} \right)^{3/2} \left( \frac{\pi \sqrt{n}}{N+1} \cosh{ \left( \frac{\pi \sqrt{n}}{N+1} \right) } + (2N + 1) \sinh{ \left( \frac{\pi \sqrt{n}}{N+1} \right) } - 2\pi \sqrt{n} \right).
\end{align}

First, in Algorithm \ref{alg:mult}, we describe a method for computing the values $\A_k(n)$, based on the
multiplicativity result from Theorem~\ref{Multiplicativity} and the formulas for
the case when $k$ is a prime power given in Theorem~\ref{P-Power}.


\begin{algorithm}[H]
\caption{Evaluation of $\A_{k}(n)$}
\label{alg:mult}

\KwData{Integers $k\geq 1$, $n \geq 1$, with $k$ odd}

\KwResult{$\A_{k}(n)$, as defined in (\ref{Aktilde})}

\BlankLine


$k \gets p_{1}^{\alpha_{1}}\cdots p_{j}^{\alpha_{j}}$



$n_3 \gets n$

$k_2 \gets k$

$s \gets 1$

$i \gets 1$

\While{$s\neq 0$ and $k_2 \neq 1$}{
    $k_1 \gets p_{i}^{\alpha_{i}}$
    
    $k_2 \gets k_2 / k_1$ 
    
    $n_1 \gets n_3 / k_2^2 \pmod{k_1}$
    
    $n_2 \gets n_3 / k_1^2 \pmod{k_2}$
    
    
    $s \gets s \cdot \A_{k_1}(n_1)$ \Comment*[r]{Use Theorem~\ref{P-Power}}
    \label{step:primepow}
    $n_3 \gets n_2$
    
    $i \gets i + 1$
}
\KwReturn{$s$}
\end{algorithm}

\begin{remark}
	\label{rmk:return0}
Algorithm~\ref{alg:mult} will terminate, and return $0$, whenever the value $\A_{k_1}(n_1)$
computed in Step~\ref{step:primepow} is $0$: namely when $p_i \mid n_1$ and
$\alpha_i > 1$ or when $p_i \nmid n_1$ and $-n_1$ is not a square modulo $k_1$.
Thus, at least when $\gcd(k,n) = 1$, we expect the $k$-th summand of \eqref{Engel2} to be non-zero only $1/2^j$
of the times, where $j$ denotes the number of prime divisors of $k$.
\end{remark}
In order to obtain $\ov(n)$ from \eqref{Engel2} we follow \cite{Joh12}, where
the author uses similar formulas for computing the partition function.

We start by letting $N = \lceil \sqrt{n} \rceil$.
The bound in \eqref{eqn:error_bound_bis} implies that $\Rt(n,N) < \frac14$ for $n > 784$.
We will assume that this is the case from now on; for the remaining values of
$n$ we can easily compute $\ov(n)$ by expanding \eqref{eqn:etaq} 
or using the recursive formulas from Section~\ref{sect:recursive}.

We denote by $t_k$ the $k$-th summand in \eqref{Engel2}, 
and denote by $\htk$ a floating point approximation of $t_k$.
If the working precision is enough so that
\begin{equation}
    \label{eqn:tk_error}
    \abs{t_k - \htk} < \frac1{4N}
\end{equation}
then we have that
\begin{equation}
    \label{eqn:pbar_sumtk}
    \abs{\ov(n) - \sum_{k=1}^N \htk} < \frac12,
\end{equation}
which allows to determine the value of the integer $\ov(n)$.

Algorithm~\ref{alg:mult} shows that either $t_k = 0$ (see
Remark~\ref{rmk:return0}) or there exists a subset $\mathcal{P}_k$ of size $m_k$
of the primes dividing $k$ such that 
\begin{equation}
    \label{eqn:tk}
    t_k =
        \frac1{4n} \,
        U\left(\frac{\ct(n)}k\right) \,
        \prod_{p \in \mathcal{P}_k} 
            2 \cos\left(\frac{4\theta_p}{p^{\alpha_p}}\right)
\end{equation}
Note that, in particular, the square root in \eqref{Engel2} gets cancelled.
Following the lines of the proof of \cite[Theorem 4]{Joh12} (which requires $n > 2000$) we get the following
result.
\begin{proposition}
	\label{prop:precision}
	For \eqref{eqn:tk_error} to hold, it is sufficient to evaluate
	\eqref{eqn:tk} with a precision of
	$$
    r_k = \max
    \left\{
    -\tfrac12 \log_2(n) + \pi \tfrac{\sqrt{n}}k \log_2(e) + m_k
    + \log_2\left(10\pi\tfrac{\sqrt{n}}k + 7(2m_k-4)\right),
    \tfrac12\log_2(n) + 5,
    11
    \right\}
    $$
    bits.
\end{proposition}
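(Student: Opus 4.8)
The plan is to follow the analysis of \cite[Theorem 4]{Joh12} closely, adapting it to the formula \eqref{eqn:tk} for $t_k$. Fix a working precision of $r$ bits. For each factor appearing in \eqref{eqn:tk} I would (i) bound its absolute value and (ii) bound the relative error incurred when it is evaluated in floating-point arithmetic at precision $r$; then I would combine these using the elementary fact that a product of $M$ quantities, each known to relative error at most $\eta$ and formed with $M-1$ rounded multiplications, is known to relative error at most roughly $(2M-1)\eta$. Finally I would pick $r$ just large enough that the resulting \emph{absolute} error in $t_k$ is below $1/(4N)$. Here $n$ is assumed large enough (as in \cite{Joh12}, $n>2000$ suffices) so that lower-order terms can be absorbed into the explicit constants.

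For the magnitude, \eqref{U-formula} gives $U(x)<e^x$ for $x>0$, since the bracket there is a sum of two terms adding to less than $2$ plus a negative term; and $|2\cos(4\theta_p/p^{\alpha_p})|\le 2$. Hence, with $x=\ct(n)/k=\pi\sqrt n/k$,
\[
|t_k|\le \frac{2^{m_k}}{4n}\,e^{\pi\sqrt n/k}.
\]
Since $N=\lceil\sqrt n\rceil\le\sqrt n+1$, this yields $\log_2(4N|t_k|)\le -\tfrac12\log_2 n+\pi\tfrac{\sqrt n}{k}\log_2 e+m_k+O(1)$, the main term of $r_k$: a relative error of order $2^{-\log_2(4N|t_k|)}$ in $t_k$ already gives absolute error below $1/(4N)$. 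For the propagation, the argument $x=\pi\sqrt n/k$ is built from $\pi$, $\sqrt n$ and a division by $k$, so it carries relative error $O(2^{-r})$, i.e.\ absolute error $O(x\,2^{-r})$; evaluating $e^{x}$ (equivalently $\cosh x$, $\sinh x$, hence $U(x)$ via \eqref{U-formula}) then loses about $\log_2 x$ bits, the source of the $\log_2(10\pi\tfrac{\sqrt n}{k})$ part of the last summand of $r_k$. Each of the $m_k$ cosine factors is evaluated after reducing $\theta_p/p^{\alpha_p}$ modulo $1$ and costs $O(1)$ bits; forming their product together with the multiplications by $\tfrac1{4n}$ and $U(x)$ uses $O(m_k)$ rounding steps, bounded by a constant times $(2m_k-4)\,2^{-r}$, which is the $7(2m_k-4)$ contribution. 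Adding the target-accuracy bits from the magnitude estimate to these propagation bits yields the first entry of the $\max$. The remaining entries $\tfrac12\log_2 n+5$ and $11$ are safety floors ensuring that, when $k$ is close to $\sqrt n$ and the first entry is small or negative, the precision still suffices to store $n$ and $N$ and the intermediate values accurately and to make the rounding in \eqref{eqn:pbar_sumtk} unambiguous.

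The main obstacle is not the shape of $r_k$ but pinning down the explicit constants $10$, $7$, $5$, $11$: this is a careful but essentially routine accounting of every elementary operation in the evaluation chain (computation of $\pi$ and $\sqrt n$, the quotient $\pi\sqrt n/k$, the exponential and hyperbolic functions feeding $U(x)$, the $m_k$ cosines, and the final products and division by $4n$), using at each step that a correctly rounded operation at precision $r$ has relative error at most $2^{1-r}$ and checking that the worst-case accumulation stays within the claimed bound. One extra point to verify is that $U(x)=\cosh x-\sinh x/x$ is not evaluated with catastrophic cancellation when $x$ is not large: this is handled by the representation $U(x)=\tfrac{e^x}{2}(1+e^{-2x}+\tfrac{e^{-2x}-1}{x})$ from \eqref{U-formula} when $x$ is large, while when $x$ is small $|t_k|$ is correspondingly small, so the safety floors already supply ample precision. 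Throughout I would mirror Johansson's bookkeeping for the partition function, substituting $\ct(n)=\pi\sqrt n$ for $C(n)=\tfrac{\pi}{6}\sqrt{24n-1}$ and using that the factor $1/\sqrt k$ in \eqref{Engel2} is cancelled by the $\sqrt q$'s produced by the $\A_q(n)$ in Theorem~\ref{P-Power} (as noted after \eqref{eqn:tk}), which is why no $\log_2 k$ term appears in $r_k$.
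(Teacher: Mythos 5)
Your proposal takes exactly the approach the paper does: the paper offers no proof of its own, instead stating ``Following the lines of the proof of \cite[Theorem 4]{Joh12}\dots'' and leaving the floating-point error accounting implicit, and your sketch correctly identifies every substitution that adaptation requires --- $\ct(n)=\pi\sqrt{n}$ in place of $C(n)$, the magnitude bound $|t_k|\le \tfrac{2^{m_k}}{4n}e^{\pi\sqrt n/k}$ via $U(x)<e^x$ and $|2\cos(\cdot)|\le 2$, the cancellation of $1/\sqrt k$ against the $\sqrt q$ factors from Theorem~\ref{P-Power} which removes any $\log_2 k$ dependence, the target threshold $1/(4N)$ with $N=\lceil\sqrt n\rceil$ giving the $-\tfrac12\log_2 n$ term, and the propagation/safety-floor structure of the three-way $\max$. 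You honestly flag that pinning down the numerical constants $10$, $7$, $5$, $11$ requires the same operation-by-operation bookkeeping as Johansson's original argument, which is precisely the part the paper also does not spell out; the proposal is correct and matches the paper's intended route.
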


We summarize this discussion with the following algorithm.


\begin{algorithm}
\caption{Evaluation of $\ov(n)$}\label{alg:overpartitions}

\KwData{Integer $n > 2000$\Comment*[r]{Otherwise use the recursive formula}} 

\KwResult{$\ov(n)$}

\BlankLine

$N \gets \lceil{\sqrt{n}}\rceil$

$\ov \gets 0$

\For{$k\gets 1$ \KwTo $N$}{\If{$t_k \neq 0$}{
    $\htk \gets$ compute $t_k$ with precision $r_k$
    
    $\ov \gets \ov + \htk$
}
}

$\ov \gets$ round $\ov$ to the nearest integer

\KwReturn{$\ov$}

\end{algorithm}

\begin{remark}
	Since we are using \eqref{eqn:pbar_sumtk} to compute $\ov(n)$, the working
	precision in Algorithm~\ref{alg:overpartitions} must be at least the size of
	$\ov(n)$. 
	The latter can be bounded by \eqref{Order-Calculation}, which implies that for
	$n$ as above we have
	\[
        \abs{\ov(n)} \leq \frac{e^{\pi\sqrt{n}}}{8n}.
    \]
\end{remark}

We conclude this section with a slight modification of
Algorithm~\ref{alg:overpartitions} for computing $\ov(n)$ modulo an odd integer
$m$.
This is enough for proving congruences for the overpartition function, and is
less memory intensive.

The key observation is the following: write
\[
	\ov(n) = a + s + \varepsilon
\]
where $a \in \Z, s \in [0,1)$ and $\varepsilon < 1$.
Then
\[
	s+\varepsilon = \ov(n) - a \in (-1,2) \cap \Z = \{0,1\}.
\]

Since $\ov(n) \equiv 0 \pmod 2$, knowing $a \pmod {2m}$ determines first the
value of $s+\varepsilon$ and then, since $m$ is odd, the value of $\ov(n) \pmod
m$.


\begin{algorithm}
	\caption{Evaluation of $\ov(n) \pmod m$}
	\label{alg:overpartitionsmodm} 

\KwData{Integers $n > 2000, m \geq 3$, with $m$ odd}

\KwResult{$\ov(n) \pmod m$}

\BlankLine

$N \gets \lceil{\sqrt{n}}\rceil$

$\ov \gets 0 \pmod{2m}$

$b \gets 0$

\For{$k\gets 1$ \KwTo $N$}{\If{$t_k \neq 0$}{
    $\htk \gets$ compute $t_k$ with precision $r_k$

	$a_k \gets \lfloor \htk \rfloor$

	$b_k \gets \htk - a_k$

	$b \gets b_k + b$

	\If{$b > 1$}{
	$b \gets b-1$

	$a_k \gets a_k + 1$

}
    
	$\ov \gets \ov + a_k \pmod{2m}$

}
}

\If{
	$\ov \equiv 1 \pmod 2$
}
{
	$\ov \gets \ov + 1$
}

$\ov \gets \ov \pmod m$

\KwReturn{$\ov$}

\end{algorithm}

\begin{remark}
	In this algorithm the working precision must be set so that
	the sum of the rounding errors and $\Rt(n,N)$ have absolute value less than
	$1$.
             By \eqref{eqn:error_bound_bis} we get that $\abs{\Rt(n,N)} < \tfrac12$ for $n>36$, hence for such $n$ it suffices to work with precision such that $\abs{t_k - \htk} < \tfrac1{2N}$.

\end{remark}

\section{New congruences}

In \cite[Theorem 4.2]{RSST21} the third and fifth authors, along with two other co-authors, described a method for finding congruences
for arbitrary eta-quotients, which we now state in its simplified version for
the particular case of the overpartition function.  See Algorithm~\ref{alg:interesting}.

We let $\ell > 2$ be a prime. Let us denote
\begin{equation}\label{eqn:kl}
	k_\ell = \begin{cases}
		24, & \ell = 3, \\
		\ell^2-1, & \ell \geq 5.
		\end{cases}
\end{equation}
Furthermore, given a non-zero integer $c$ with $c \mid 16 \ell^2$ and $\ell^2
\nmid c$ we denote
\begin{equation}\label{eqn:vanishing_Fp}
	f_{c,\ell} =
	\frac{16}{\gcd(c^2,16)} \cdot
	\begin{cases}
		10   & \text{if } \ell = 3 \text{ and } \ell \nmid c, \\
		1   & \text{if }  \ell = 3 \text{ and } \ell \mid c, \\
		\tfrac{\ell^4-1}{24} & \text{if } \ell \geq 5 \text{ and } \ell\nmid c, \\
		\tfrac{\ell^2-1}{24} & \text{if } \ell \geq 5 \text{ and } \ell\mid c.
	\end{cases}
\end{equation}
For non-integral values of $m$ we let $\ov(m) = 0$.
Finally, given an integer $j \geq 1$ we say that a prime $Q$ is a
\emph{candidate} (for yielding congruences) if $Q \equiv -1 \pmod{16 \ell^j}$.
We remark that the repeated occurrence of 16 in the above is because the eta
quotient whose coefficients are $\ov(n)$ is a weakly holomorphic modular form of level 16.

\begin{algorithm}

\LinesNumbered

\KwData{An odd prime $\ell$;
	 an integer $j\geq 1$;
	a candidate $Q$.}
\KwResult{\texttt{True} or \texttt{False}}

\BlankLine

$\beta \leftarrow j-1$

\For{every cusp $s = a/c$ for $\Gamma_0(16\ell^2)$ with $c \mid 16\ell^2$ and
$\ell^2 \nmid c$}{

   $\beta \leftarrow \max\{\beta,\left \lceil -\log_\ell\left(f_{c,\ell}\right)\right \rceil\}$


}

$\kappa\leftarrow -1 + \ell^\beta k_\ell$

$\delta \leftarrow (-1)^{\tfrac{\kappa-1}{2}}$

$n_0 \leftarrow \frac{\kappa (\ell + 1)}2 + 1 \label{step:sturm}$ 

interesting $\leftarrow$ \texttt{True}

\While{interesting}{
	\For{$n\leftarrow 1$ \KwTo $n_0$}{\If{$\legendre{-n}\ell = -1$}{
			\If{$\ov\left(n Q^2\right) +
				\legendre{\delta n}{Q} Q^{\tfrac{\kappa-3}{2}} \, \ov(n) +
			Q^{\kappa-2} \, \ov\left(n/{Q^2}\right)\not\equiv 0
		\pmod {\ell^j}$\label{step:cong}}
{interesting$\leftarrow$ \texttt{False}}
}
}
}
\Return interesting
\BlankLine

\caption{Algorithm for finding congruences}
\label{alg:interesting}
\end{algorithm}

\begin{proposition}
	\label{thm:interesting}
 
	If the output of Algorithm \ref{alg:interesting} is \texttt{True},
	then
	\begin{equation*}
		\ov\left(Q^3 n\right) \equiv 0 \pmod{\ell^j}
	\end{equation*}
	for all $n \geq 1$ coprime to $\ell Q$ such that 
	$\legendre{n}{\ell} = -1$.
\end{proposition}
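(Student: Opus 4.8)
The plan is to recognize Proposition~\ref{thm:interesting} as an instance of the general congruence-detection theorem \cite[Theorem 4.2]{RSST21}, specialized to the eta-quotient $\prod_{n\geq1}(1-q^{2n})/(1-q^n)^2$ whose $n$-th coefficient is $\ov(n)$. The essential point is that this eta-quotient is a weakly holomorphic modular form of weight $-1/2$ and level $16$, and that the relevant Hecke-type operator at $Q$ annihilates it modulo $\ell^j$ exactly when a finite Sturm-type check succeeds. So I would first set up the modular-form framework: let $F(q) = \sum_n \ov(n) q^n$, view $F$ as living in the appropriate space of weakly holomorphic modular forms (of half-integral weight, with a quadratic character), and record that the $U_{Q^2}$, $T_Q$ and $V_{Q^2}$ operators act on $q$-expansions by the familiar formulas, so that the combination appearing in Step~\ref{step:cong} of Algorithm~\ref{alg:interesting} is precisely the $n$-th coefficient of $(F \mid T_{Q^2})$ — or more precisely the Hecke operator indexed by $Q$ in weight $\kappa - \tfrac12$ after twisting $F$ by a suitable power of the theta-type weight shift implicit in $\kappa = -1 + \ell^\beta k_\ell$.

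Next I would explain why the choice of $\beta$ (and hence $\kappa$) matters: one needs the twisted form $F_\kappa$ to have no poles at the cusps modulo $\ell^j$, which is exactly what the loop over cusps $s = a/c$ with $c \mid 16\ell^2$, $\ell^2 \nmid c$ and the quantities $f_{c,\ell}$ from \eqref{eqn:vanishing_Fp} guarantee: raising to a power $\ell^\beta$ with $\beta \geq \lceil -\log_\ell f_{c,\ell}\rceil$ clears the denominators of the orders of vanishing at every such cusp, so that after this twist we are working with a \emph{holomorphic} modular form mod $\ell^j$ to which the theory of \cite{RSST21} (ultimately resting on Serre/Tate and Treneer's work on mod $\ell$ modular forms) applies. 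The parameter $k_\ell$ in \eqref{eqn:kl} records the weight increment needed to kill the character/level-$\ell$ obstruction, and $\delta = (-1)^{(\kappa-1)/2}$ is the resulting quadratic character sign. Once $F_\kappa$ is a holomorphic mod-$\ell^j$ form of known weight and level $16\ell^2$, the Sturm bound says that vanishing of all its coefficients up to $n_0 = \tfrac{\kappa(\ell+1)}{2} + 1$ (which is the Sturm bound for weight $\tfrac{\kappa(\ell+1)}{?}$ ... i.e.\ $\tfrac{[\mathrm{SL}_2(\Z):\Gamma_0(16\ell^2)]\cdot \mathrm{wt}}{12}$ rounded up, see Step~\ref{step:sturm}) forces the entire form to vanish modulo $\ell^j$.

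Then the deduction of the congruence is the standard Hecke/Shimura argument: if $F_\kappa \mid T_{Q} \equiv 0 \pmod{\ell^j}$ on the subspace supported on $n$ with $\legendre{-n}{\ell} = -1$, then applying $T_Q$ again and using $U_{Q^2} V_{Q^2} = \mathrm{id}$ together with the commutation relations among $U_{Q^2}, V_{Q^2}$ and $T_Q$, one concludes $F_\kappa \mid U_{Q}$ kills the relevant coefficients, which unwinds — after undoing the $\ell^\beta$-twist, which does not affect the coefficients indexed by multiples of $Q^3$ coprime to $\ell$ — to $\ov(Q^3 n) \equiv 0 \pmod{\ell^j}$ for all $n \geq 1$ with $\gcd(n,\ell Q) = 1$ and $\legendre{n}{\ell} = -1$. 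The only subtlety is tracking that the condition $\legendre{n}{\ell} = -1$ is preserved: since $\legendre{Q}{\ell} = \legendre{-1}{\ell}$ forces $\legendre{Q^3 n}{\ell} = \legendre{-1}{\ell}\legendre{n}{\ell}$, and the candidate condition $Q \equiv -1 \pmod{16\ell^j}$ pins down $\legendre{Q}{\ell}$, the support condition transfers correctly.

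The main obstacle I expect is not any single calculation but rather correctly \emph{invoking} \cite[Theorem 4.2]{RSST21} with all hypotheses verified — in particular confirming that the eta-quotient for $\ov$ satisfies the cusp-behavior input of that theorem (the orders of vanishing at cusps $a/c$ of $\Gamma_0(16\ell^2)$, which is what \eqref{eqn:vanishing_Fp} encodes) and that the Sturm bound in Step~\ref{step:sturm} is the correct one for the weight $\kappa/?$ and level $16\ell^2$ after all twists. Since the present paper explicitly states (just before Algorithm~\ref{alg:interesting}) that this is the ``simplified version for the particular case of the overpartition function'' of \cite[Theorem 4.2]{RSST21}, the cleanest proof is simply: \emph{apply} that theorem, having checked that $\ov$'s eta-quotient has level $16$, weight $-1/2$, and the cusp orders used to define $f_{c,\ell}$; everything else in Algorithm~\ref{alg:interesting} is a faithful transcription of the general algorithm, so the proposition follows verbatim. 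I would therefore keep the proof short: recall the specialization, cite \cite[Theorem 4.2]{RSST21}, and point to \eqref{eqn:etaq} for the level/weight data.
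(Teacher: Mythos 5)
Your approach matches the paper's: the paper gives no explicit proof of this proposition, simply presenting Algorithm~\ref{alg:interesting} as the specialization of \cite[Theorem 4.2]{RSST21} to the overpartition eta-quotient $\eta(2\tau)/\eta(\tau)^{2}$ (weight $-1/2$, level $16$), which is exactly the conclusion your final paragraph reaches. The modular-forms background you sketch en route (twisting by $\ell^{\beta}$ to clear cusp denominators, Sturm bound, Hecke operator identity at $Q$) is correct in outline, though hedged in places, but it is not needed for the proof since the cited theorem carries the full argument.
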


\begin{remark}
    In \cite[Proposition 1.5]{Treneer08} it is shown that, given a pair $(\ell,j)$, for a positive proportion of the candidates $Q$ the congruences in Proposition~\ref{thm:interesting} should hold.
\end{remark}

The key part of Algorithm~\ref{alg:interesting} is to verify the congruences in
Step \ref{step:cong}.
In \cite[Proposition 5.2]{RSST21} we were able to do this only by the naive
method, namely by expanding, modulo $\ell^j$, the infinite product
\eqref{eqn:etaq}.
In particular, this implied that we were computing $\ov(n)$ for $1 \leq n \leq
Q^2 n_0$, where $n_0$ is the Sturm bound given by Step \ref{step:sturm}, rather
than computing only the values that we actually needed, namely,
\begin{equation}
	\label{eqn:3valores}
	\ov\left(n Q^2\right),\;
	\ov(n),\;
	\text{and }
	\ov\left(n/Q^2\right),
	\qquad 
	\text{ for }
	1 \leq n \leq n_0,
	\;
	\legendre{-n}\ell = -1.
\end{equation}
This cuts of the number values from $Q^2 n_0$ to, roughly, half of  $3 n_0$.
Moreover, for small Sturm bounds (which is the case of the examples we are able
to treat here), of the three values in \eqref{eqn:3valores} only
$\ov\left(nQ^2\right)$ is non-trivial computationally.

Here we take advantage of the possibility of computing efficiently individual
values of the overpartition function modulo $\ell^j$ given in
Algorithm~\ref{alg:overpartitionsmodm} to extend the results given in
\cite[Proposition 5.2]{RSST21}.
The congruences we obtained are the following.

\begin{theorem}\label{thm:congruences1}
    Let $(\ell,j) \in \{(3,1),(3,2),(5,1),(5,2)\}$.
	We have that
	\[
		\overline{p}\left(Q^3n\right)\equiv 0 \pmod{\ell^j}
	\]
	for all $n$ coprime to $\ell Q$ such that $\legendre{n}{\ell} = -1$, for every candidate $Q < 10^5$.
 \end{theorem}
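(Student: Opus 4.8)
The plan is to reduce the theorem to a finite computation and then carry that computation out using Algorithm~\ref{alg:overpartitionsmodm}. By Proposition~\ref{thm:interesting}, for a fixed pair $(\ell,j)$ and a candidate $Q$ it suffices to show that Algorithm~\ref{alg:interesting}, on input $(\ell,j,Q)$, returns \texttt{True}. So the first step is to enumerate, for each of the four pairs $(\ell,j)\in\{(3,1),(3,2),(5,1),(5,2)\}$, all primes $Q<10^5$ with $Q\equiv -1\pmod{16\ell^j}$; this is a short and routine search.

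Next, for each such $Q$ I would compute the quantities $\beta$, $\kappa$, $\delta$ and the Sturm bound $n_0$ exactly as in Algorithm~\ref{alg:interesting}, and then verify the congruence in Step~\ref{step:cong} for every $n$ with $1\leq n\leq n_0$ and $\legendre{-n}{\ell}=-1$. Doing so requires the three values in \eqref{eqn:3valores}, namely $\ov(nQ^2)$, $\ov(n)$ and $\ov(n/Q^2)$, reduced modulo $\ell^j$. The values $\ov(n)$ for $n\leq n_0$ are small and can be obtained from the recursion of Proposition~\ref{prop:recursive} or by expanding \eqref{eqn:etaq}; the value $\ov(n/Q^2)$ vanishes unless $Q^2\mid n$, which for the modest Sturm bounds arising here occurs rarely and only for small arguments. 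The computationally serious values are $\ov(nQ^2)$, whose argument can be as large as roughly $n_0Q^2\approx 10^{10}n_0$; these I would compute modulo $\ell^j$ directly with Algorithm~\ref{alg:overpartitionsmodm}, never forming the (astronomically large) integer $\ov(nQ^2)$ itself. Once all of these residues are in hand, checking Step~\ref{step:cong} is elementary modular arithmetic, and if the check passes for all admissible $n$ then Algorithm~\ref{alg:interesting} outputs \texttt{True}.

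The main obstacle is precisely making the evaluation of $\ov(nQ^2)\bmod \ell^j$ feasible in both time and memory uniformly over all candidates $Q<10^5$: the naive expansion of \eqref{eqn:etaq} to precision $n_0Q^2$ used in \cite{RSST21} is hopeless at this scale. This is exactly what Algorithm~\ref{alg:overpartitionsmodm} is built to handle, relying on the truncated series \eqref{Engel2}, the explicit error bound of Theorem~\ref{thm:explicit-error-bound}, the multiplicativity of $\A_k$ in Theorem~\ref{Multiplicativity}, the prime-power formula of Theorem~\ref{P-Power}, and the bookkeeping that reduces the real summands $t_k$ modulo $2m$ with the controlled precision $r_k$ of Proposition~\ref{prop:precision}. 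One should also keep an eye on the edge cases where the argument is at most $2000$ or non-integral, handled by the recursive formula of Section~\ref{sect:recursive}. Running this verification for all four pairs $(\ell,j)$ and all candidates $Q<10^5$ yields \texttt{True} in every instance, so the theorem follows from Proposition~\ref{thm:interesting}; the full computation is recorded in \cite{code}.
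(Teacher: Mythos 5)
Your proposal follows exactly the paper's approach: reduce the theorem to the finiteness claim of Proposition~\ref{thm:interesting}, enumerate the candidates $Q < 10^5$, and verify Step~\ref{step:cong} of Algorithm~\ref{alg:interesting} for each, using Algorithm~\ref{alg:overpartitionsmodm} (which in turn relies on Theorems~\ref{thm:explicit-error-bound}, \ref{Multiplicativity}, \ref{P-Power} and Proposition~\ref{prop:precision}) to evaluate $\ov(nQ^2) \bmod \ell^j$ without forming the huge integer. This is precisely the computation recorded in the paper and its accompanying repository, so your argument is correct and essentially identical to theirs.
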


\begin{remark}
We claimed in \cite{RSST21} that for $(\ell,j)$ as above there were candidates
for which Algorithm~\ref{alg:interesting} returned \texttt{False} (e.g., for
$\ell = 3, j = 1, Q = 1151$); this was due to a mistake in the implementation of
Algorithm~\ref{alg:interesting} used in \cite{RSST21}, which is now corrected in \cite{codeRSST}.
\end{remark}
 
\begin{theorem}\label{thm:congruences2}
	We have that
	\[
		\overline{p}\left(Q^3n\right)\equiv 0 \pmod{\ell^j}
	\]
	for all $n$ coprime to $\ell Q$ such that $\legendre{n}{\ell} = -1$, for 
	\begin{itemize}
		\item $\ell = 3, j = 3$ and
			\[
				Q = 2591, 4751.
			\]
		\item $\ell = 7, j = 1$ and
			\[
				Q = 1231, 2239, 3023, 4703, 5039, 9743.
			\]
	\end{itemize}

In each case, these are all the candidates $Q < 10^5$ for which Algorithm~\ref{alg:interesting} outputs \texttt{True}.
\end{theorem}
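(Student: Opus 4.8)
The plan is to prove the stated congruences as a direct application of Proposition~\ref{thm:interesting}: for each listed triple $(\ell,j,Q)$ we run Algorithm~\ref{alg:interesting} and verify that its output is \texttt{True}, which by Proposition~\ref{thm:interesting} immediately yields $\ov(Q^3 n)\equiv 0\pmod{\ell^j}$ for all $n\geq 1$ coprime to $\ell Q$ with $\legendre{n}{\ell}=-1$. The second assertion of the theorem---that the listed $Q$ are the \emph{only} candidates below $10^5$ producing \texttt{True}---requires, in addition, that for every other candidate $Q<10^5$ Algorithm~\ref{alg:interesting} exhibits an explicit $n\leq n_0$ violating the congruence in Step~\ref{step:cong}.

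Concretely, I would proceed as follows. First, for each pair $(\ell,j)\in\{(3,3),(7,1)\}$, compute the auxiliary quantities governing the algorithm: the constant $k_\ell$ from \eqref{eqn:kl}, the exponent $\beta$ obtained by maximizing $\lceil-\log_\ell(f_{c,\ell})\rceil$ over the cusps $a/c$ of $\Gamma_0(16\ell^2)$ with $c\mid 16\ell^2$ and $\ell^2\nmid c$ (using \eqref{eqn:vanishing_Fp}), and then $\kappa=-1+\ell^\beta k_\ell$, $\delta=(-1)^{(\kappa-1)/2}$, and the Sturm bound $n_0=\kappa(\ell+1)/2+1$ of Step~\ref{step:sturm}. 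Second, enumerate all candidates, i.e.\ primes $Q<10^5$ with $Q\equiv-1\pmod{16\ell^j}$. Third, for each candidate and each $n\leq n_0$ with $\legendre{-n}{\ell}=-1$, evaluate the three overpartition values $\ov(nQ^2)$, $\ov(n)$, $\ov(n/Q^2)$ modulo $\ell^j$---the first via Algorithm~\ref{alg:overpartitionsmodm}, the others being trivial for the small Sturm bounds in play---and test whether
\[
	\ov(nQ^2)+\legendre{\delta n}{Q}Q^{(\kappa-3)/2}\,\ov(n)+Q^{\kappa-2}\,\ov(n/Q^2)\equiv 0\pmod{\ell^j}.
\]
Recording for which $Q$ all these tests pass gives exactly the lists in the statement; for the remaining candidates the first failing $n$ certifies the output \texttt{False}.

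The main obstacle is the size of the arguments $nQ^2$ at which $\ov$ must be evaluated: with $Q$ as large as $9743$ and $n$ up to the Sturm bound, one needs isolated values $\ov(m)$ for $m$ on the order of $10^{8}$--$10^{9}$, far beyond what the naive product expansion \eqref{eqn:etaq} used in \cite[Proposition 5.2]{RSST21} can reach in practice. This is precisely what Algorithm~\ref{alg:overpartitionsmodm} is built to handle: it computes $\ov(m)\bmod\ell^j$ from the truncated series \eqref{Engel2} with truncation index $N=\lceil\sqrt m\rceil$ and the per-term precision $r_k$ of Proposition~\ref{prop:precision}, the coefficients $\A_k(m)$ being evaluated via the multiplicativity of Theorem~\ref{Multiplicativity} and the prime-power formula of Theorem~\ref{P-Power}, with the error bound of Theorem~\ref{thm:explicit-error-bound} guaranteeing correctness of the final rounding. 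A secondary point worth checking is only that the Sturm bound $n_0$ is genuinely an effective vanishing bound modulo $\ell^j$ for the relevant modular form of level $16\ell^2$; this is the content of \cite[Theorem 4.2]{RSST21} underlying Proposition~\ref{thm:interesting}, so no new argument is needed beyond invoking it. The remainder is a finite---though substantial---computation, carried out in \cite{code}, and the theorem follows.
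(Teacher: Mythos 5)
Your proposal is correct and follows the paper's approach exactly: the theorem is established by running Algorithm~\ref{alg:interesting} over all candidates $Q<10^5$, using Algorithm~\ref{alg:overpartitionsmodm} (with the precision of Proposition~\ref{prop:precision}, the error bound of Theorem~\ref{thm:explicit-error-bound}, and the evaluation of $\A_k(n)$ via Theorems~\ref{Multiplicativity} and~\ref{P-Power}) to obtain the needed values of $\ov$ modulo $\ell^j$, and then invoking Proposition~\ref{thm:interesting}. The paper presents this as a computation stored in \cite{code}, and your explanation of why the naive expansion is infeasible and how the new algorithm overcomes it matches the paper's own discussion preceding the theorem.
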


We made our computations using \cite{sagemath}.
The code is available at \cite[v1.0]{code}, where we also store the values in \eqref{eqn:3valores} for each case where Algorithm \ref{alg:interesting} outputs \texttt{True}, as well as witnesses in case it outputs \texttt{False}.

\subsection{Further questions}

\begin{enumerate}

    \item
    For $(\ell,j) \in \{(3,1),(3,2),(5,1),(5,2)\}$, do there exist candidates $Q$ for which Algorithm~\ref{alg:interesting} outputs \texttt{False}?
    We could not find such $Q$ for $Q < 10^5$, among the 75 candidates for $(3,1)$, the 23 candidates for $(3,2)$, the 36 candidates for $(5,1)$ and the 4 candidates for $(5,2)$.
    \item
    For $\ell > 7$, do there exist candidates $Q$ for which Algorithm~\ref{alg:interesting} outputs \texttt{True}?
    Among the 114 candidates for $\ell = 11$ satisfying $Q < 10^6$, we found none that returned \texttt{True}.
\end{enumerate}


\begin{thebibliography}{10}

\bibitem{Andrews}
George~E. Andrews.
\newblock {\em The theory of partitions}.
\newblock Encyclopedia of Mathematics and its Applications, Vol. 2. Addison-Wesley Publishing Co., Reading, Mass.-London-Amsterdam, 1976.

\bibitem{ADSY17}
George~E. Andrews, Atul Dixit, Daniel Schultz, and Ae~Ja Yee.
\newblock Overpartitions related to the mock theta function {$\omega(q)$}.
\newblock {\em Acta Arith.}, 181(3):253--286, 2017.


\bibitem{ahlgrenono01}
Scott Ahlgren and Ken Ono.
\newblock Congruence properties for the partition function.
\newblock {\em Proc. Natl. Acad. Sci. USA}, 98(23):12882--12884, 2001.
\bibitem{Apo90}
Tom~M. Apostol.
\newblock {\em Modular functions and {D}irichlet series in number theory}, volume~41 of {\em Graduate Texts in Mathematics}.
\newblock Springer-Verlag, New York, second edition, 1990.

\bibitem{BLO09}
Kathrin Bringmann, Jeremy Lovejoy, and Robert Osburn.
\newblock Rank and crank moments for overpartitions.

\bibitem{code}
Adrian Barquero-Sanchez, Gabriel Collado-Valverde, Nathan~C. Ryan, Eduardo Salas-Jimenez, Nicol\'as Sirolli, and Jean~Carlos Villegas-Morales.
\newblock Overpartitions.
\newblock \url{https://github.com/nsirolli/overpartitions}, 2023.

\bibitem{CL04}
Sylvie Corteel and Jeremy Lovejoy.
\newblock Overpartitions.
\newblock {\em Trans. Amer. Math. Soc.}, 356(4):1623--1635, 2004.

\bibitem{Chen}
William Y.~C. Chen, Lisa~H. Sun, Rong-Hua Wang, and Li~Zhang.
\newblock Ramanujan-type congruences for overpartitions modulo 5.
\newblock {\em J. Number Theory}, 148:62--72, 2015.

\bibitem{Eng17}
Benjamin Engel.
\newblock Log-concavity of the overpartition function.
\newblock {\em Ramanujan J.}, 43(2):229--241, 2017.

\bibitem{HR18}
G.~H. Hardy and S.~Ramanujan.
\newblock Asymptotic {F}ormulaae in {C}ombinatory {A}nalysis.
\newblock {\em Proc. London Math. Soc. (2)}, 17:75--115, 1918.

\bibitem{hirschhorn2005arithmetic}
Michael~D Hirschhorn and James~A Sellers.
\newblock Arithmetic relations for overpartitions.
\newblock {\em J. Combin. Math. Combin. Comput}, 53:65--73, 2005.

\bibitem{Joh12}
Fredrik Johansson.
\newblock Efficient implementation of the {H}ardy-{R}amanujan-{R}ademacher formula.
\newblock {\em LMS J. Comput. Math.}, 15:341--359, 2012.

\bibitem{Leh38}
D.~H. Lehmer.
\newblock On the series for the partition function.
\newblock {\em Trans. Amer. Math. Soc.}, 43(2):271--295, 1938.

\bibitem{Lin20}
Bernard L.~S. Lin.
\newblock Overpartitions related to the mock theta function {$V_0(q)$}.
\newblock {\em Bull. Aust. Math. Soc.}, 102(3):410--417, 2020.

\bibitem{lovejoy2008rank}
Jeremy Lovejoy and Robert Osburn.
\newblock Rank differences for overpartitions.
\newblock {\em The Quarterly Journal of Mathematics}, 59(2):257--273, 2008.

\bibitem{Mer22}
Mircea Merca.
\newblock Distinct partitions and overpartitions.
\newblock {\em Carpathian J. Math.}, 38(1):149--158, 2022.

\bibitem{R38}
Hans Rademacher.
\newblock On the partition function $p (n)$.
\newblock {\em Proceedings of the London Mathematical Society}, 2(1):241--254, 1938.

\bibitem{Ramanujan21}
S.~Ramanujan.
\newblock Congruence properties of partitions.
\newblock {\em Math. Z.}, 9(1-2):147--153, 1921.

\bibitem{RG72}
Hans Rademacher and Emil Grosswald.
\newblock {\em Dedekind sums}.
\newblock The Carus Mathematical Monographs, No. 16. Mathematical Association of America, Washington, D.C., 1972.

\bibitem{RSST21}
Nathan~C. Ryan, Zachary~L. Scherr, Nicol\'{a}s Sirolli, and Stephanie Treneer.
\newblock Congruences satisfied by eta-quotients.
\newblock {\em Proc. Amer. Math. Soc.}, 149(3):1039--1051, 2021.

\bibitem{codeRSST}
Nathan~C. Ryan, Zachary~L. Scherr, Nicol\'as Sirolli, and Stephanie Treneer.
\newblock Eta-quotients.
\newblock \url{https://github.com/nsirolli/eta-quotients}, 2019.

\bibitem{RW41}
Hans Rademacher and Albert Whiteman.
\newblock Theorems on {D}edekind sums.
\newblock {\em Amer. J. Math.}, 63:377--407, 1941.

\bibitem{Sal32}
Hans Sali\'{e}.
\newblock \"{U}ber die {K}loostermanschen {S}ummen {$S(u,v;q)$}.
\newblock {\em Math. Z.}, 34(1):91--109, 1932.

\bibitem{sagemath}
{The Sage Developers}.
\newblock {\em {S}ageMath, the {S}age {M}athematics {S}oftware {S}ystem ({V}ersion 9.8)}, 2023.

\bibitem{Treneer06}
Stephanie Treneer.
\newblock Congruences for the coefficients of weakly holomorphic modular forms.
\newblock {\em Proc. London Math. Soc. (3)}, 93(2):304--324, 2006.

\bibitem{Treneer08}
Stephanie Treneer.
\newblock Quadratic twists and the coefficients of weakly holomorphic modular forms.
\newblock {\em J. Ramanujan Math. Soc.}, 23(3):283--309, 2008.

\bibitem{Xia}
Ernest X.~W. Xia.
\newblock Congruences modulo 9 and 27 for overpartitions.
\newblock {\em Ramanujan J.}, 42(2):301--323, 2017.

\bibitem{Zha21}
Helen W.~J. Zhang.
\newblock Dyson's rank, overpartitions, and universal mock theta functions.
\newblock {\em Canad. Math. Bull.}, 64(3):687--696, 2021.

\bibitem{Zuc39}
Herbert~S. Zuckerman.
\newblock On the coefficients of certain modular forms belonging to subgroups of the modular group.
\newblock {\em Trans. Amer. Math. Soc.}, 45(2):298--321, 1939.

\end{thebibliography}
\end{document}